\def\P{{\mathbf{P}}}
\def\Z{{\mathbb{Z}}}
\def\K{{\mathbb{K}}}
\def\CC{{\mathbb{C}}}
\def\R{{\mathbb{R}}}
\def\A{{\mathcal{A}}}
\def\B{{\mathcal{B}}}
\def\oS{{\overline{S}}}  
\def\one{{\mathbbm1}}   
\def\pp{{\mathfrak p}}  
\newcommand{\abs}[1]{\left|#1\right|}     
\newcommand{\set}[1]{\left\{#1\right\}}   
\DeclareMathOperator{\rank}{rank}
\DeclareMathOperator{\codim}{codim}
\DeclareMathOperator{\Der}{Der}
\DeclareMathOperator{\pd}{pd}
\DeclareMathOperator{\spec}{Spec}
\DeclareMathOperator{\res}{res}
\DeclareMathOperator{\POexp}{POexp}
\DeclareMathOperator{\Proj}{Proj}
\DeclareMathOperator{\Ext}{Ext}
\numberwithin{equation}{section}
\newtheorem{theorem}{Theorem}[section]
\newtheorem{prop}[theorem]{Proposition}
\newtheorem{cor}[theorem]{Corollary}
\newtheorem{lemma}[theorem]{Lemma}
\newtheorem{conj}[theorem]{Conjecture}
\theoremstyle{remark}
\newtheorem{rem}[theorem]{Remark}
\newtheorem{example}[theorem]{Example}
\theoremstyle{definition}
\newtheorem{define}[theorem]{Definition}
\title{On Ziegler's conjectures for logarithmic derivations of arrangements}
\author{Takuro Abe and 
Graham Denham}
\date{\today}
\begin{document}

\maketitle

\begin{abstract}
  In his paper and thesis in 1989, Ziegler posed several conjectures regarding
  commutative algebra related to hyperplane arrangements. 
In this article, we revisit two of them.  One is on generic cuts of free arrangements, and the other has to do with minimal degree generators for the logarithmic differential forms. We prove the first one, and disprove the second one.
We also give some positive answers to related problems he posed, using recent
developments in arrangement theory.
\end{abstract}

\section{Introduction}
Let $\K$ be a field, $V=\K^\ell$, and
$S=\K[x_1,\ldots,x_\ell]$ its coordinate ring. Let $\Der S:=\oplus_{i=1}^\ell S \partial_{x_i}$ and 
$\Omega^p_V:=
\Lambda^p\oplus_{i=1}^\ell S dx_i$. 
Let $\A$ be a central, essential arrangement of hyperplanes in $V$, i.e., a finite set of hyperplanes in $V$ whose intersection is $0$. For 
each $H \in \A$ fix 
$\alpha_H \in V^*$ such that 
$\ker \alpha_H=H$, and 
let $Q(\A):=
\prod_{H \in \A} \alpha_H$. Then the module of \textbf{logarithmic vector fields}
on $V$ with respect to $\A$ is defined by 
\[
D(\A):=\{\theta \in \Der S \mid \theta(\alpha_H) \in S\alpha_H\ (\forall H \in \A)\},
\]
and \textbf{the module of logarithmic differential $1$-forms} by
\[
\Omega^1(\A):=
\{
\omega \in \displaystyle \frac{1}{Q(\A)} \Omega^1_V 
\mid 
\omega \wedge d\alpha_H \in 
\displaystyle \frac{\alpha_H}{Q(\A)} \Omega^2_V \ 
(\forall H \in \A)\}.
\]
These modules are mutually dual (see \cite{OT} for example), thus both are 
reflexive $S$-modules of rank $\ell$; however, they are not free in general.
They inherit the standard grading from $S$ by letting $\deg\partial_{x_i}=0$
for each $i$.  We say that $\A$ is \textbf{free} with $\exp(\A)=(d_1,d_2,\ldots,d_\ell)$ if 
\[
D(\A) \cong \oplus_{i=1}^\ell S[-d_i].
\]
We may assume $1=d_1\leq d_2\leq\cdots\leq d_\ell$.
Taking duals, this is equivalent to
\[
\Omega^1(\A) \cong \oplus_{i=1}^\ell S [d_i].
\]

The homological properties of these modules have been a subject of interest
since their inception in the paper of Kyoji Saito~\cite{Sa}.  For hyperplane
arrangements, the interaction of properties like freeness or degrees of
generation with the combinatorics of the arrangements has proven to be
especially subtle and is the subject of extensive literature: see, for
example, \cite{A2,A4,A5,A6}.

Outside of the free case, $D(\A)$ and $\Omega^1(\A)$ have similar but
inequivalent behaviour with respect to the operation of deleting a hyperplane.
For example, in \cite{A5} it is shown that if $\A$ is free, then for any
$H \in \A$, the projective dimension 
$\pd_S D(\A \setminus \{H\})$ is at most one. On the other hand, even if $\A$ 
is free, 
there are examples for which $\pd_S \Omega^1(\A \setminus \{H\})=\ell-2$ when 
$\ell=4$
(noting that this is as large as possible for a reflexive module, 
see non-vanishing of $\mbox{Ext}^2$ in 
\cite[Example\ 5.10]{AD}).

Deletion-contraction arguments for matroids and arrangements are a fundamental
tool.
For a hyperplane $H\in\A$, let $\A':=\A\setminus\set{H}$ denote the deletion and
$\A^H:=\set{H\cap K\mid K\in \A'}$ the restriction.
Logarithmic derivations fit into the following \textbf{Euler exact sequence}:
\begin{equation}\label{eq:Dseq}
0 \rightarrow 
D(\A') 
\stackrel{\cdot \alpha_H}{\rightarrow}
D(\A)
\stackrel{\rho^H}{\rightarrow}
D(\A^H).
\end{equation}
The map $\rho^H$ is induced by the canonical surjection $S\to\oS:=S/\alpha_H S$:
for $\theta\in D(\A)$ and $\overline{f}\in \oS$, we let
\[
\rho^H(\theta)(\overline{f}):=
\overline{\theta(f)}.
\]
The map $\rho^H$ is called the \textbf{Euler restriction map}.
The exact sequence \eqref{eq:Dseq} links algebraic and combinatorial properties
of $\A$: in particular, it is central to the proof of Terao's famous
Addition-Deletion Theorem~\cite{T1}.

The analogous one for logarithmic forms was first considered by
Ziegler~\cite{Z2}:
\[
0 \rightarrow \Omega^1(\A) \stackrel{\cdot \alpha_H}{\rightarrow}\Omega^1(\A') 
\stackrel{\res_H}{\rightarrow} \Omega^1(\A^H),
\]
who showed that logarithmic forms were suitably functorial on arrangements (see
\cite[Thm.\ 4.4]{Z2}.  Here, $\res_H$ denotes the pullback of differential forms along the inclusion $H\hookrightarrow V$.  
This is to say that, if $\omega$ is a logarithmic form on $\A'$ whose
restriction to $H$ is zero, then $\alpha_H^{-1}\omega$ is logarithmic on
$\A$.
We note that the restriction map of forms was considered from arrangements to
multiarrangements
by Ziegler~\cite{Z} and used to advantage by Yoshinaga in \cite{Y1}; however,
it seems to have been overlooked for arrangements without multiplicities.

In \cite{Z2}, Ziegler gave several conjectures related to algebraic
structure of $\Omega^1(\A)$. We revisit Ziegler's conjectures in this paper.
For that purpose, let us give a precise defintion of a
``generic'' subspace.

\begin{define}
  For an essential arrangment $\A$ in $V=\K^\ell$, let
\[
L(\A):=\set{\cap_{H \in \B} H \mid \B \subseteq \A}
\]
be the \textbf{intersection lattice} of $\A$, ordered by reverse
inclusion and ranked by codimension.  Let $X$ be a subspace not in $L(\A)$.
  
We say that $X$ is \textbf{$k$-generic} with respect to $\A$ if 
$\codim_V (X \cap Y)=\codim X+\codim Y$ for all $Y \in L_{\le k}(\A)$. 
We say that $X$ is 
\textbf{generic} if $X$ is $(\dim X)$-generic. 
\label{kgeneric}
\end{define}

For example, a hyperplane $H\subset V$
is generic with respect to $\A$ if $\codim (X \cap H)=\codim X+1$ for 
all $X \in L(\A\setminus\set{H}) \setminus \{0\}$. A hyperplane $H$ is $1$-generic if and
only if $H\not\in\A$.  Now let us recall the first conjecture we consider:

\begin{conj}[\cite{Z2}, Conjecture 7.3]
Let 
$\A$ be an  essential and free arrangement in $\K^\ell$ with $\ell \ge 4$. 
Let $X$ be a generic subspace with $\dim X >2$, and 
let $\A \cap X$ denote the restriction of $\A$ onto $X$: that is,
\[
\A \cap X:=\{ H \cap X \mid H \in \A' \}
\]
Then the restriction map 
\[
\res_X\colon \Omega^1(\A) \rightarrow \Omega^1(\A \cap X)
\]
is surjective, and the number of the minimal set of generators for 
$\Omega^1(\A)$ and $\Omega^1(\A \cap X)$ are the same.
\label{Zconj}
\end{conj}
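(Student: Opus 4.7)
The plan is to induct on $c = \codim_V X$. In the base case $c=1$, $X$ is a generic hyperplane and the argument proceeds via Ziegler's exact sequence for logarithmic forms. For the inductive step $c \ge 2$, choose a generic hyperplane $H \supset X$; then $X$ is generic of codimension $c-1$ in $H$ with respect to $\A' \cap H$, and the restriction map factors as
\[
\Omega^1(\A') \longrightarrow \Omega^1(\A' \cap H) \longrightarrow \Omega^1(\A' \cap X),
\]
composing restriction to $H$ (in $V$) with restriction to $X$ (inside $H$). Since $\A' \cap H$ is not typically free, the induction must be carried with a strengthened statement covering generic hyperplane sections of free arrangements; this strengthened statement would be proved by the same induction, with the base case remaining the crux.

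The base case proceeds as follows. Set $H := X$ and $\A := \A' \cup \set{H}$, and use Ziegler's sequence
\[
0 \to \Omega^1(\A) \stackrel{\cdot \alpha_H}{\to} \Omega^1(\A') \stackrel{\res_H}{\to} \Omega^1(\A^H).
\]
The key technical claim is $\depth_S \Omega^1(\A) \ge 2$. Given this, since $\Omega^1(\A')$ has depth $\ell$ (because $\A'$ is free), the depth lemma forces the image of $\res_H$ to have depth $\ge \ell - 1$ over $S$, hence full rank $\ell - 1$ as an $\oS$-module with depth $\ge \ell - 2$; the cokernel of $\res_H$ is $\alpha_H$-torsion inside the reflexive $\oS$-module $\Omega^1(\A^H)$, and therefore vanishes. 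For the generator count, surjectivity and freeness of $\Omega^1(\A')$ yield at most $\ell$ generators of $\Omega^1(\A^H)$; for the reverse inequality, I would show $\A^H$ is not free, so that $\Omega^1(\A^H)$ (a rank $\ell - 1$ reflexive $\oS$-module) strictly needs more than $\ell - 1$ generators. Non-freeness for generic $H$ with $\dim H = \ell - 1 \ge 3$ I would derive from Terao's factorization theorem applied to $\chi(\A^H, t)$, which can be computed from $\chi(\A, t)$ via deletion-restriction at $H$.

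The principal obstacle is the depth bound $\depth_S \Omega^1(\A) \ge 2$ for $\A$ obtained from a free $\A'$ by the generic addition of one hyperplane. While the corresponding bound for $D$ under deletion from a free arrangement ($\pd_S D(\A') \le 1$) is known \cite{A5}, the introduction emphasizes that $D$ and $\Omega^1$ can diverge precisely in such situations. I would attempt the bound via a direct lifting argument exploiting the genericity of $H$, possibly combined with Yoshinaga-style multi-restriction methods, noting that for generic $H$ the Ziegler multi-restriction at $H$ has trivial multiplicity and coincides with the ordinary restriction $\A^H$.
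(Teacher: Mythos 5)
Your outline reproduces the general shape of the paper's argument (induct on generic hyperplane cuts, use Ziegler's sequence, get surjectivity from a depth/cohomology bound on $\Omega^1(\A)$), but two of the steps you rely on fail as stated. First, the threshold in your ``key technical claim'' is too weak, and the deduction from it is wrong: from $0\to\Omega^1(\A)\xrightarrow{\cdot\alpha_H}\Omega^1(\A')\to\im(\res_H)\to 0$ the depth lemma only gives $\depth \im(\res_H)\ \ge\ \min\{\ell,\ \depth_S\Omega^1(\A)-1\}$, so $\depth_S\Omega^1(\A)\ge 2$ yields depth $\ge 1$ for the image, not $\ell-1$ (depth $\ell-1$ would require $\Omega^1(\A)$ to be free). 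What surjectivity onto $\Omega^1(\A^H)$ actually needs is $H^2_{\frakm}(\Omega^1(\A))=0$, i.e.\ $\depth_S\Omega^1(\A)\ge 3$, equivalently $\pd_S\Omega^1(\A)\le\ell-3$; and in addition you must know the cokernel inside $\Omega^1(\A^H)$ is supported only at the irrelevant ideal --- ``an $\alpha_H$-torsion quotient of a reflexive module vanishes'' is false in general, and this is precisely where genericity enters (local surjectivity of the restriction at every point of $\Proj S$, the paper's Proposition~\ref{genericlocsurj}). Moreover you leave the depth bound itself unproven; in the paper it is supplied by the fact that adding one hyperplane to a free arrangement makes $\Omega^1(\A)$ SPOG, so $\pd_S\Omega^1(\A)=1$ and the bound holds once $\ell\ge 4$. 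Second, non-freeness of $\A^H$ cannot be extracted from Terao's factorization theorem: the characteristic polynomial of a generic truncation of a free arrangement can factor over the positive integers (for instance, a supersolvable arrangement with exponents $(1,2,3,10)$ has generic cut with characteristic polynomial $(t-1)(t-7)(t-8)$), so there is no combinatorial obstruction --- which is exactly why Ziegler's non-freeness conjecture is an algebraic statement. The paper derives non-freeness \emph{from} the generator count, not the reverse.

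The inductive step is also not just bookkeeping: the ``strengthened statement'' you defer is where the content lies. After one cut $\A'\cap H$ is no longer free; its module of logarithmic forms has rank $\ell-1$ but $\ell$ generators, and your counting scheme (surjectivity bounds the number of generators above by the number upstairs; a non-free reflexive module of rank $r$ needs at least $r+1$ generators) then gives only the bounds $\ell$ and $\ell-1$ at the next cut, so it cannot reproduce the exact count demanded by the conjecture. What is needed, and what the paper proves (Theorem~\ref{zconj1general}(2),(4)), is that a \emph{minimal} generating set of $\Omega^1(\A')$ restricts to a \emph{minimal} generating set of $\Omega^1(\A^H)$, and that $\pd_S\Omega^1$ does not increase under generic restriction so the cohomological hypothesis persists; the minimality step uses Bath's regularity bound to exclude generators of degree $0$. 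Without substitutes for these two facts the induction does not close.
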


Ziegler wrote in \cite{Z2} that if Conjecture \ref{Zconj} were true,
then we would have the following implication,
which was subsequently proved by Yuzvinsky \cite{Yu}.

\begin{theorem}[\cite{Z2}, Corollary 3.5, \cite{Yu}]
  For $\A$ and $X$ as in Conjecture \ref{Zconj}, the arrangement
  $\A \cap X$ is never free.  In other words, 
a generic cut of a free arrangement is not free.
\label{Zconj3}
\end{theorem}

To state Ziegler's second conjecture, let us 
introduce the following definition:

\begin{define}
  For an integer $k$ we say
$\A$ is \textbf{$k$-critical} if 
$\Omega^1(\A)_{-k} \neq 0$, and 
$\Omega^1(\A \setminus \{H\})_{-k} = 0$ 
for any $H \in \A$.
\label{critical}
\end{define}

If $\A$ is $k$-critical for some $k\geq0$, then
$\abs{\A}-\abs{\A^H} \ge k$ for all $H \in \A$, which motivated the
following:

\begin{conj}[\cite{Z2}, Conjecture 8.6]
  If $\A$ is $k$-critical, there exists some $H\in\A$ for
  which $\abs{\A}-\abs{\A^H} = k$.
\label{Zconj2}
\end{conj}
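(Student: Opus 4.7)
The plan is to unpack a nonzero $\omega \in \Omega^1(\A)_{-k}$ into combinatorial data attached to each $H \in \A$. Write $\omega = \eta/Q(\A)$ with $\eta \in \Omega^1_V$ homogeneous of degree $|\A| - k$ (in the convention $\deg dx_i = 0$). The condition $\eta \wedge d\alpha_H \in \alpha_H \Omega^2_V$ is equivalent, in coordinates with $\alpha_H = x_1$ and $\eta = \sum_i f_i\, dx_i$, to the divisibilities $f_i \in \alpha_H S$ for all $i \geq 2$. Equivalently, the pullback $i_H^*\eta \in \Omega^1_H$ vanishes, so the reduction $\bar\eta_H := \eta \bmod \alpha_H$ has the form $\bar\eta_H = g_H\, d\alpha_H$ for some $g_H$ in the quotient $\oS_H := S/\alpha_H S$, homogeneous of degree $|\A| - k$.

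The lower bound $|\A| - |\A^H| \geq k$ then comes from comparing this data at codim-$2$ strata. For each $H' \neq H$ in $\A$, the identities $\bar\eta_H = g_H\, d\alpha_H$ and $\bar\eta_{H'} = g_{H'}\, d\alpha_{H'}$, further reduced to $H \cap H'$, must agree; since $d\alpha_H$ and $d\alpha_{H'}$ are linearly independent modulo $(\alpha_H, \alpha_{H'})$, this forces $g_H$ to vanish on $H \cap H'$. Hence $g_H$ is divisible in $\oS_H$ by the reduced product $Q(\A^H) := \prod_{X \in \A^H} \alpha_X$, so we may write $g_H = Q(\A^H)\tilde g_H$ with $\tilde g_H \in \oS_H$ of degree $|\A| - k - |\A^H|$. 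The linear map $\omega \mapsto g_H$ from $\Omega^1(\A)_{-k}$ to $\oS_H$ has kernel $\Omega^1(\A \setminus \{H\})_{-k}$, which vanishes by $k$-criticality, so $g_H$ and $\tilde g_H$ are nonzero, giving $|\A| - |\A^H| \geq k$.

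To upgrade this inequality to equality for some $H$ is the main challenge. My plan would be to pick $H$ with $|\A^H|$ maximal (or, alternatively, a suitable representative $\omega$) and force the cofactor $\tilde g_H$ to be a nonzero constant in $\oS_H$. The obstacle is that the various $\tilde g_H$ live on different quotient rings, and I see no immediate mechanism that prevents every $\tilde g_H$ from having strictly positive degree. I would attempt a cohomological vanishing statement for the kernel of the global restriction map $\Omega^1_V \to \bigoplus_{H \in \A} \Omega^1_H$ in the graded piece of degree $|\A| - k$, combined with an inductive step via the Ziegler multi-arrangement on some extremal $H$. Absent a structural argument of this kind, the possibility that Ziegler's inequality is strict at every $H \in \A$ would remain genuinely open, suggesting that the conjecture admits counterexamples built from arrangements engineered so that excess degree cannot be eliminated by any choice of $\omega$.
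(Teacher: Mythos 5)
Your proposal cannot be completed, and the reason is not a missing lemma on your side: the statement is false, and the paper's contribution here is a \emph{disproof}. Your first two paragraphs are fine --- they re-derive Ziegler's inequality $\abs{\A}-\abs{\A^H}\ge k$ for every $H\in\A$ (this is exactly the paper's motivating proposition, proved via the Strong Preparation Lemma, Lemma~\ref{Tlemma}: the coefficient of $d\alpha_H$ is nonzero by $k$-criticality and divisible by $\prod_{X\in\A^H}\alpha_X$). But the step you flag as ``the main challenge,'' forcing the cofactor $\tilde g_H$ to be a nonzero constant for some $H$ (or for some choice of $\omega$), is a genuine obstruction: there are arrangements for which $\tilde g_H$ has strictly positive degree at every hyperplane, no matter which $\omega\in\Omega^1(\A)_{-k}$ is chosen. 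Your closing speculation that counterexamples exist is the correct resolution, but a refutation requires exhibiting one, which your argument does not do.

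The mechanism the paper uses (Theorem~\ref{thm:Zconj2False}) ties criticality to freeness of deletions rather than to any cohomological vanishing. Let $\A$ be free and irreducible of rank $3$ with $\exp(\A)=(1,a,b)_\le$ such that no deletion $\A\setminus\set{H}$ is free. Then $\A$ is $b$-critical: if $\Omega^1(\A\setminus\set{H})_{-b}\neq0$ for some $H$, the degree $-b$ form together with $\omega_1$ and $\alpha_H\omega_2$ (where $\omega_1,\omega_2,\omega_3$ is a basis of $\Omega^1(\A)$ of degrees $-1,-a,-b$) satisfies Saito's criterion~\cite{Sa}, making the deletion free, a contradiction. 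On the other hand, Theorem~\ref{A} gives $\abs{\A^H}\le a+1$ or $\abs{\A^H}=b+1$, and either equality would make the deletion free by Terao's Addition--Deletion Theorem~\ref{adddel}; hence $\abs{\A^H}\le a$ and $\abs{\A}-\abs{\A^H}\ge b+1>b$ for \emph{every} $H$. Concrete instances are the reflection arrangements $G(r,r,3)$, $r\ge3$, with $\exp=(1,r+1,2r-2)$ and $\abs{\A^H}=r+1$ for all $H$ (Corollary~\ref{ce}), the cone over the regular pentagon with its diagonals ($\exp=(1,5,5)$), and the arrangements $\A_{13},\A_{15}$ of \cite{ACKN} (Example~\ref{pentagon}). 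So the conjecture fails, and any strategy aimed at proving equality for a well-chosen $H$ or $\omega$ necessarily breaks down on these examples.
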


The purpose of this article is to prove Conjecture \ref{Zconj} and by it we give another proof 
of Theorem~\ref{Zconj3}. Also we disprove Conjecture \ref{Zconj2}. 
Our Theorem~\ref{zconjverygeneral} gives a somewhat more general
version of Conjecture \ref{Zconj} and Theorem \ref{Zconj3}, using
recent bounds on the regularity of $\Omega^1(\A)$ due to Dan Bath~\cite{Bath22}.

\section*{Acknowledgements}
The first author is
partially supported by JSPS KAKENHI Grant Numbers JP18KK0389 and JP21H00975.
The second author is supported by NSERC of Canada, and would like to thank
the Max Planck Institute for Mathematics for its hospitality during the final
preparation of this manuscript.
The authors are grateful to Avi Steiner for his careful reading of an earlier
draft, as well as to an anonymous referee.

\section{Preliminaries}
Let us recall some relevant results about arrangements.  By convention, we
assume that hyperplane arrangements are \textbf{simple}: that is, the
linear forms $\set{\alpha_H\mid H\in\A}$ are pairwise linearly independent.
In order to keep track of multiplicities, for each $m\in\Z_{>0}^\A$, let
\[
Q(\A,m):=\prod_{H\in\A}\alpha_H^{m(H)}.
\]
The pair $(\A,m)$ is a \textbf{multiarrangement} and can also be regarded as a
linear realization of a matroid without loops in which (unordered)
parallel edges are allowed.
Also recall the higher order logarithmic modules, defined as follows:

\begin{eqnarray*}
D^p(\A,m)&:=&\set{ \theta \in \wedge^p \Der S \mid 
\theta(\alpha_H,f_2,\ldots,f_p) \in S \alpha_H^{m(H)}\ (\forall H \in \A,\ 
\forall f_2,\ldots,f_p \in S)},\\
\Omega^p(\A,m)&:=&\set{ \omega \in \frac{1}{Q(\A,m)}\Omega^p_V \mid 
(Q(\A,m)/\alpha_H^{m(H)}) \omega \wedge d\alpha_H \in \Omega^1_V\ (\forall H \in \A)}.
\end{eqnarray*}

We will make use of the 
``Strong Preparation Lemma'' of Ziegler~\cite[Lem.\ 5.1]{Z2}, reformulated
as in \cite{AD}.
\begin{lemma}
Let $x_1,x_2,\ldots,x_\ell$ be coordinates for $V^*$ and assume $\alpha_H=x_1$.
  Let 
$\omega=\sum_{i=1}^\ell \displaystyle \frac{f_i}{Q}dx_i 
\in \Omega^1(\A)$, where $Q:=Q(\A)$ and $f_i
\in S$. Then 
\[
f_1 \in (\alpha_H,\prod_{X \in \A^H} \alpha_X).
\]
Here 
$\alpha_X$ denotes $\alpha_L$ for a chosen $L \in \A$ with $L \cap H=X$. 
If we let $B:=\prod_{X \in \A^H} \alpha_X$, then $\deg B/Q=-|\A|+|\A^H|$.

%
\label{Tlemma}
\end{lemma}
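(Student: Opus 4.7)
The plan is to use the logarithmic condition $\omega\wedge d\alpha_L\in(\alpha_L/Q)\Omega^2_V$ for each $L\in\A$, first with $L=H$ to constrain the $f_j$ for $j\ge 2$, and then with each $L\in\A\setminus\{H\}$ to pin down $f_1$ after reducing modulo $x_1$.

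Writing $\alpha_L=\sum_{k=1}^\ell c_k^L x_k$, a short computation gives
\[
\omega\wedge d\alpha_L=\sum_{i<k}\frac{f_ic_k^L-f_kc_i^L}{Q}\,dx_i\wedge dx_k,
\]
and the logarithmic condition is equivalent to $\alpha_L\mid f_ic_k^L-f_kc_i^L$ in $S$ for every $i<k$. Taking $L=H$, so that $c_1^H=1$ and $c_j^H=0$ for $j\ge 2$, the coefficient indexed by $(1,j)$ gives $x_1\mid f_j$ for every $j\ge 2$. For any $L\in\A\setminus\{H\}$ and any $j\ge 2$, the same coefficient reads $\alpha_L\mid f_1c_j^L-f_jc_1^L$ in $S$.

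Now I reduce modulo $x_1$ in $\oS=S/x_1 S$: since $\overline{f_j}=0$ for $j\ge 2$ by the previous step, the relation collapses to $\overline{\alpha_L}\mid \overline{f_1}\,c_j^L$. Because $L\ne H$, the form $\alpha_L$ is not a scalar multiple of $x_1$, so at least one $c_j^L$ with $j\ge 2$ is a nonzero scalar and may be cancelled, yielding $\overline{\alpha_L}\mid \overline{f_1}$. Since $\overline{\alpha_L}$ depends only on $X=L\cap H\in\A^H$, and the linear forms $\overline{\alpha_X}$ for distinct $X\in\A^H$ are pairwise non-associate and hence coprime in $\oS$, their product $\overline{B}=\prod_{X\in\A^H}\overline{\alpha_X}$ must divide $\overline{f_1}$. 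This is exactly the assertion $f_1\in(\alpha_H,B)$. The degree statement then follows from $\deg B=|\A^H|$ and $\deg Q=|\A|$. The main subtlety is bookkeeping: several $L\in\A\setminus\{H\}$ may restrict to the same $X\in\A^H$, so the divisibility naturally indexes over $\A^H$ rather than $\A\setminus\{H\}$, which is what yields $B$ rather than a product over $\A\setminus\{H\}$.
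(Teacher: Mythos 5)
Your proof is correct, and it is essentially the standard argument behind Ziegler's Strong Preparation Lemma: use $\omega\wedge d\alpha_H$ to get $x_1\mid f_j$ for $j\ge 2$, then reduce the conditions from the remaining $L\in\A\setminus\{H\}$ modulo $x_1$ so that the pairwise non-associate restricted forms $\overline{\alpha_X}$, one per $X\in\A^H$, all divide $\overline{f_1}$, giving $f_1\in(\alpha_H,B)$. The paper itself cites this result from Ziegler \cite{Z2} and \cite{AD} without reproving it, and your argument (including the careful indexing over $\A^H$ rather than $\A\setminus\{H\}$ and the degree count) fills that in correctly.
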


\begin{theorem}[Terao's Addition-Deletion theorem, \cite{T1}]
Let $ H \in \A,\ \A':=\A \setminus \{H\}$ and let $\A^H:=\{H \cap L \mid 
L \in \A'\}$. Then any two of the following three implies the third:
\begin{itemize}
    \item [(1)]
    $\A$ is free with $\exp(\A)=(d_1,\ldots,d_{\ell-1},d_\ell).$
    \item [(2)]
    $\A'$ is free with $\exp(\A')=(d_1,\ldots,d_{\ell-1},d_\ell-1).$
    \item [(3)]
    $\A^H$ is free with $\exp(\A^H)=(d_1,\ldots,d_{\ell-1}).$
\end{itemize}
In particular, all the three above hold if $\A $ and $\A'$ are free.
\label{adddel}
\end{theorem}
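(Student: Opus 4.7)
The strategy is to leverage the Euler exact sequence \eqref{eq:Dseq} together with Saito's determinantal criterion for freeness, using Hilbert series as the bookkeeping device. Multiplication by $\alpha_H$ is injective on the torsion-free module $D(\A')$, so the exact sequence yields
\[
\mathrm{Hilb}\bigl(D(\A)/\alpha_H D(\A'),\,t\bigr) = \mathrm{Hilb}(D(\A),t) - t\,\mathrm{Hilb}(D(\A'),t),
\]
while $\rho^H$ embeds the cokernel $C := D(\A)/\alpha_H D(\A')$ as an $\oS$-submodule of $D(\A^H)$.

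For each of the three implications, the workflow would be the same. When two of the modules $D(\A)$, $D(\A')$, $D(\A^H)$ are assumed free with explicit exponents, their Hilbert series are determined, and the displayed identity forces $C$ to have the Hilbert series of a free $\oS$-module on the tuple of exponents claimed for the third statement. To promote this numerical coincidence into an actual freeness statement I would apply Saito's criterion: combine a homogeneous basis of a known-free module with either lifts along $\rho^H$ or multiples by $\alpha_H$ to produce $\ell$ candidate elements in the target module, and then compute the determinant of their coefficient matrix. Factoring $\alpha_H$ out of the distinguished column that came from $D(\A')$ and specialising the remaining minor modulo $\alpha_H$ reduces the computation to the Saito determinant on $\A^H$ or $\A'$, which equals $Q(\A^H)$ or $Q(\A')$ up to a nonzero scalar; reassembling yields $Q(\A)$ (or $Q(\A')$, $Q(\A^H)$, as appropriate) up to scalar, which is exactly what Saito's criterion asks for.

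The main obstacle I expect is controlling the image of $\rho^H$. The sequence \eqref{eq:Dseq} is not right exact in general, so one cannot simply quote surjectivity; rather, the Hilbert-series identity gives only an a priori upper bound on $\mathrm{Hilb}(C,t)$, and one needs a matching lower bound on $\mathrm{Hilb}(D(\A^H),t)$ to force the image of $\rho^H$ to fill out the whole target in the relevant degrees. Here the analogue of Lemma~\ref{Tlemma} for $D(\A)$ rather than $\Omega^1(\A)$ is the key input: it constrains the leading coordinate of a homogeneous derivation of minimal degree, and from this one produces explicit preimages in $D(\A)$ of the candidate generators of $D(\A^H)$. Once surjectivity of $\rho^H$ is secured in the relevant graded pieces, the Hilbert-series comparison pins down the exponents and the Saito-determinant calculation closes the argument in each of the three cases.
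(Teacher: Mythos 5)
The paper does not actually prove this statement: it is quoted as background, with a citation to Terao's original paper \cite{T1} (see also \cite[Ch.~4]{OT}), so there is no in-paper argument to compare against. Your outline does follow the classical route — the Euler sequence \eqref{eq:Dseq}, Saito's criterion, and a preparation-type lemma controlling $\theta(\alpha_H)$ — so the strategy itself is the standard one.

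As a proof, however, it has a genuine gap precisely at the point you label ``the main obstacle.'' The Hilbert-series identity only determines the cokernel $C=D(\A)/\alpha_H D(\A')$ in the direction $(1)+(2)\Rightarrow(3)$; in the addition direction $(2)+(3)\Rightarrow(1)$ the series of $D(\A)$ is the unknown, and in the deletion direction $(1)+(3)\Rightarrow(2)$ the series of $D(\A')$ is, so in two of the three cases the displayed identity cannot ``force'' anything about $C$ — and even where it does, a module with the Hilbert series of a free module need not be free, while $C=D(\A^H)$ is exactly the surjectivity question you defer. Likewise, the Saito-determinant computation presupposes that the candidate elements exist in the right degrees: e.g.\ in the addition case one must produce homogeneous $\theta_1,\dots,\theta_{\ell-1}\in D(\A)$ of degrees $d_1,\dots,d_{\ell-1}$ whose Euler restrictions form a basis of $D(\A^H)$, and then show the relevant determinant is a \emph{nonzero} multiple of $Q(\A)$; the reduction modulo $\alpha_H$ is not just ``specialise the remaining minor'' — after factoring $\alpha_H$ from the distinguished column, the row of $\partial_{\alpha_H}$-coefficients of the other columns vanishes mod $\alpha_H$, and one must expand along that row and invoke the derivation analogue of Lemma~\ref{Tlemma} together with a degree comparison to see the resulting factor is a nonzero scalar times $\prod_{X\in\A^H}\alpha_X$. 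Producing the lifts and carrying out this degree analysis in each of the three directions is the entire content of Terao's proof; you correctly identify the needed tool but do not execute it, so what you have is a sound plan rather than a complete argument.
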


\begin{theorem}[{\cite[Theorem 1.13]{A9}}] 
If $\A':=\A \setminus \{H\}$ is free, then
\[
\rho^H\colon D^p(\A) \rightarrow D^p(\A^H)
\]
is surjective for all $p \in \Z_{\ge 0}$.
\label{thm:FST}
\end{theorem}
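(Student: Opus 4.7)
The plan is to prove the theorem first for $p=1$ via a homological argument exploiting freeness of $\A'$, then extend to higher $p$ using the identification $D^p(\A')\cong\wedge^p D(\A')$ available when $\A'$ is free.

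For $p=1$, I would begin by verifying that the Euler sequence
\[
0 \to D(\A')[-1] \xrightarrow{\alpha_H} D(\A) \xrightarrow{\rho^H} D(\A^H)
\]
is exact at the first two positions; this is routine using that $\alpha_H$ is a nonzerodivisor and is coprime to every $\alpha_L$ for $L\in\A'$. Set $K:=\im\rho^H$ and $C:=D(\A^H)/K$; the task is to show $C=0$. Since $\A'$ is free, $D(\A')$ is a free graded $S$-module, so from the short exact sequence $0\to D(\A')[-1]\to D(\A)\to K\to 0$ one derives a bound on $\pd_S K$ that, via the change-of-rings identity $\pd_{\oS} K=\pd_S K-1$ (valid since $\alpha_H$ annihilates $K$), translates into a depth lower bound on $K$ as an $\oS$-module.

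The key geometric input is a localization analysis: at every height-one prime $\mathfrak p$ of $\oS$, the localized arrangements $\A_{\mathfrak p}$ and $\A'_{\mathfrak p}$ are pencils through a codimension-two flat of $\A$, hence free; Addition-Deletion (Theorem~\ref{adddel}) then forces $\A^H_{\mathfrak p}$ to be free, and surjectivity of $\rho^H_{\mathfrak p}$ follows from Saito's criterion. This shows $C$ is supported in codimension $\geq 2$ on $\oS$. Combining the support condition with the depth bound on $K$ and the reflexivity of $D(\A^H)$ over $\oS$, a depth chase through $0\to K\to D(\A^H)\to C\to 0$ forces $C=0$. The argument extends to $p\geq 2$ essentially verbatim, since freeness of $\A'$ gives $D^p(\A')=\wedge^p D(\A')$ free and the higher-order restriction $D^p(\A^H)$ remains reflexive.

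The principal obstacle is the depth chase. A naive application of the depth lemma yields only $\depth_{\oS} C\geq 1$, which together with $\dim C\leq \ell-3$ forces $C=0$ only when $\ell\leq 3$. To close the argument in general, one must either show that $K$ itself is reflexive over $\oS$ (so that the codim-$\geq 2$ equality of $K$ and $D(\A^H)$ propagates to a global equality by reflexivity), or sharpen the bound on $\pd_S D(\A)$ --- for instance by establishing $\pd_S D(\A)\leq 1$ under the freeness hypothesis on $\A'$, analogous to the bound of~\cite{A5}. An alternative, more direct, route would construct preimages in $D(\A)$ explicitly: given $\bar\eta\in D(\A^H)$, lift it to $\tilde\eta\in\Der S$ with $\tilde\eta(\alpha_H)=0$, encode the obstruction to $\tilde\eta\in D(\A)$ as a class in $\bigoplus_{L\in\A'} S/\alpha_L S$, and use the freeness of $\A'$ to realize this class in the image of the evaluation map from $\Der S$.
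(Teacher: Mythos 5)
This statement is not proved in the paper at all: it is quoted as an external result from \cite{A9}, so the only thing to assess is whether your argument would stand on its own. It does not, and the gap is exactly where you flag it. Your scheme reduces surjectivity to the vanishing of $C=\coker\bigl(D^p(\A)\to D^p(\A^H)\bigr)$, and the two inputs you have are (a) $C$ is supported in codimension $\ge 2$ on $H$ (the localization argument, which is fine, but is true for \emph{any} $\A$ and so carries none of the force of the freeness hypothesis), and (b) a depth bound on $K=\im\rho^H$ coming from $0\to D^p(\A')[-1]\to D^p(\A)\to K\to 0$. But (b) requires control of $\pd_S D^p(\A)$, which is precisely what is unknown: before the theorem is proved one only gets $\pd_S K\le\max\{\pd_S D^p(\A),1\}$, with no bound on the first term, so the depth chase through $0\to K\to D^p(\A^H)\to C\to 0$ only rules out $C\neq0$ in very low rank, as you yourself note. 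Neither escape route you mention closes the gap: $K$ has no a priori reason to be reflexive over $\oS$, and the sharpened bound $\pd_S D(\A)\le 1$ when $\A'$ is free is essentially the SPOG/deletion theorem of \cite{A5}, which in that paper is established together with (and not independently of) the surjectivity of $\rho^H$ for $p=1$ --- so invoking it is circular unless you import exactly the machinery you are trying to avoid. The ``explicit lifting'' alternative is likewise only a restatement of the problem: the obstruction class in $\bigoplus_{L\in\A'}S/\alpha_L S$ is not visibly hit by the evaluation map just because $D(\A')$ is free.

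A secondary weak point is the claim that the case $p\ge 2$ follows ``verbatim.'' The module whose depth or projective dimension you would need to control is $D^p(\A)$ for the non-free arrangement $\A$, and freeness of $\A'$ gives $D^p(\A')=\wedge^p D(\A')$ but says nothing directly about $D^p(\A)$; in \cite{A9} the higher $p$ cases are not a formal consequence of $p=1$ but use the structural (SPOG) description of $D(\A)$ obtained from the freeness of the deletion. So the proposal correctly identifies the architecture of a possible proof but leaves the genuinely hard step --- the one that uses freeness of $\A'$ globally rather than locally --- unproved.
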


\begin{theorem}[{\cite[Theorem\ 1.3]{AD}}]
If $\A$ is free, then 
  \[
  \res_H\colon \Omega^1(\A')\rightarrow \Omega^1(\A^H)
  \]
is surjective for all $H \in \A$.
\label{thm:FST2}
\end{theorem}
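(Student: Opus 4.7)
The plan is to prove this homologically, by dualizing the Euler restriction sequence \eqref{eq:Dseq} via $\Hom_S(-,S)$ and using $\Ext^1_S(D(\A),S) = 0$ from freeness of $\A$. I would work in coordinates with $\alpha_H = x_1$, set $\overline{S} := S/x_1 S$, and let $I := \im \rho^H \subseteq D(\A^H)$ and $Q := \coker \rho^H$, so that \eqref{eq:Dseq} splits into short exact sequences
\[
\text{(a)}\colon\ 0 \to D(\A') \xrightarrow{\cdot x_1} D(\A) \to I \to 0, \qquad
\text{(b)}\colon\ 0 \to I \to D(\A^H) \to Q \to 0.
\]

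Applying $\Hom_S(-,S)$ to (a), freeness of $\A$ kills $\Ext^1_S(D(\A),S)$ and the $x_1$-annihilation of $I$ kills $\Hom_S(I,S)$, collapsing the long exact sequence to
\[
0 \to \Omega^1(\A) \xrightarrow{\cdot x_1} \Omega^1(\A') \to \Ext^1_S(I,S) \to 0.
\]
The Koszul resolution $0 \to S \xrightarrow{x_1} S \to \overline{S} \to 0$ gives the change-of-rings isomorphism $\Ext^1_S(N,S) \cong \Hom_{\overline{S}}(N,\overline{S})$ for every $\overline{S}$-module $N$, and the induced iso $\Omega^1(\A')/x_1\Omega^1(\A) \cong \Hom_{\overline{S}}(I,\overline{S})$ is given concretely by the pairing $[\omega] \mapsto \bigl(\rho^H(\theta) \mapsto \overline{\omega(\theta)}\bigr)$, which is well defined by the exactness of (a). Dualizing (b) similarly (all $\Hom_S$ terms vanish) gives
\[
0 \to \Hom_{\overline{S}}(Q,\overline{S}) \to \Omega^1(\A^H) \xrightarrow{\iota} \Hom_{\overline{S}}(I,\overline{S}),
\]
where $\iota$ is restriction of homomorphisms along $I \hookrightarrow D(\A^H)$.

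The key compatibility --- which I would verify by direct calculation --- is the formula $\res_H(\omega)(\bar\theta) = \overline{\omega(\theta)}$ whenever $\bar\theta = \rho^H(\theta)$ with $\theta \in D(\A)$. This identifies the surjection $\Omega^1(\A') \twoheadrightarrow \Hom_{\overline{S}}(I,\overline{S})$ with $\iota \circ \res_H$, forcing $\iota$ to be surjective. Tracking kernels then gives $\ker(\iota\circ\res_H) = \ker\res_H = x_1\Omega^1(\A)$, and a diagram chase yields $\res_H$ surjective iff $\iota$ is injective iff $\Hom_{\overline{S}}(Q,\overline{S}) = 0$; since $\overline{S}$ is a domain, this last condition amounts to $Q$ being an $\overline{S}$-torsion module.

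To verify torsion, use the Saito matrix $M = (\theta_i(x_j))_{ij}$ of a free basis of $D(\A)$: each $\theta_i(x_1) \in x_1 S$ forces the first column to be divisible by $x_1$, and $\det M = c\,\alpha_H\, Q(\A')$ with $\overline{cQ(\A')} \neq 0$. Cofactor expansion along the first column then forces some maximal $(\ell-1)\times(\ell-1)$ minor of the reduced matrix $\overline{M}{}'$ (obtained by deleting the first column of $\overline{M}$) to be nonzero; the rows of $\overline{M}{}'$ generate $I$ over $\overline{S}$, so $\rank_{\overline{S}} I = \ell - 1 = \rank_{\overline{S}} D(\A^H)$, and $Q$ is indeed torsion. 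The main technical obstacle is the pairing compatibility $\res_H(\omega)(\bar\theta) = \overline{\omega(\theta)}$: this requires a careful unwinding of the explicit restriction-of-coefficients definition of $\res_H$, and one likely invokes Ziegler's preparation lemma (Lemma \ref{Tlemma}) to control the divisibilities needed for $\res_H(\omega)$ to land in $\Omega^1(\A^H)$ rather than in the larger Ziegler multiarrangement module.
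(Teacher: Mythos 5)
Your proposal is correct in substance, but note that the paper itself contains no proof of this statement: Theorem~\ref{thm:FST2} is imported verbatim from \cite[Theorem 1.3]{AD}, so your argument is necessarily a different (self-contained) route rather than a reconstruction of an in-paper proof. Your route is sound: dualizing the sequence $0\to D(\A')\xrightarrow{\cdot\alpha_H}D(\A)\to I\to 0$ and using $\Ext^1_S(D(\A),S)=0$ does give $\Omega^1(\A')/\alpha_H\Omega^1(\A)\cong\Ext^1_S(I,S)\cong\Hom_{\overline{S}}(I,\overline{S})$; the compatibility $\res_H(\omega)(\rho^H(\theta))=\overline{\omega(\theta)}$ is a short coordinate check (using $\theta(\alpha_H)\in\alpha_H S$ and that the coefficients of $\omega$ have no pole along $H$, and citing Proposition~\ref{es} for the fact that $\res_H$ lands in $\Omega^1(\A^H)$ at all); and your Saito-matrix expansion does show $Q=\coker(\rho^H|_{D(\A)})$ has $\overline{S}$-rank zero, so $\Hom_{\overline{S}}(Q,\overline{S})=0$ and the chase closes. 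Two remarks. First, the one step you assert rather than verify is that the cokernel map $\Omega^1(\A')\to\Ext^1_S(I,S)\cong\Hom_{\overline{S}}(I,\overline{S})$ is literally the contraction pairing $[\omega]\mapsto(\rho^H(\theta)\mapsto\overline{\omega(\theta)})$; this is true, but it needs a (routine) Yoneda pushout--pullback computation. You can bypass it entirely: given $f\in\Hom_{\overline{S}}(I,\overline{S})$, lift $f\circ\rho^H\colon D(\A)\to\overline{S}$ through the projection $S\to\overline{S}$ using only that $D(\A)$ is free; the resulting element of $\Hom_S(D(\A),S)=\Omega^1(\A)$ sends $\alpha_H D(\A')$ into $\alpha_H S$, hence pairs $D(\A')$ into $S$ and so lies in $\Omega^1(\A')$ by reflexivity/duality, realizing $f$ via the pairing. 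This makes the surjectivity of your map $\Phi=\iota\circ\res_H$ an immediate consequence of projectivity, with no $\Ext$ bookkeeping. Second, the torsionness of $Q$ actually holds for every arrangement (localize at the generic point of $H$, where $\A$ looks like the single hyperplane $H$), so the Saito-matrix argument, while correct, is not where freeness is really used; freeness enters only through the lifting/vanishing of $\Ext^1_S(D(\A),S)$. By contrast, the cited proof in \cite{AD} is developed in the broader multiarrangement setting; your argument buys a short, elementary proof of exactly the simple-arrangement case needed here.
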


In order to describe the structure of modules of logarithmic differential forms
which are close to being free, let us introduce the following.

\begin{define}[\cite{A5}, \cite{AD}]
An $S$-graded module $M$ of rank $\ell$ is \textbf{
  strongly plus-one generated (SPOG)} if there is a minimal free resolution of the following form:
\[
\begin{tikzcd}[column sep=small]
  0 \ar[r] & S[-d-1] \arrow{r}{\text{$(f_1,\ldots,f_\ell,\alpha)$}} &[3.5em]
  \oplus_{i=1}^\ell S[-d_i] \oplus S[-d] \ar[r] & M \ar[r] & 0,\\
\end{tikzcd}
\]
where $d,d_i \in \Z$ and $0 \neq \alpha \in V^*$.
The sequence of integers $\POexp(M):=(d_1,\ldots,d_\ell)$, written
in non-decreasing order, is called the \textbf{exponent vector} of 
the 
SPOG module $M$.
The integer $d$ is called the \textbf{level} of the SPOG module $M$. The 
corresponding degree $d$-element $\varphi\in M$ is called a
\textbf{level element}. Also, a minimal generating set of $M$ is called 
an \textbf{SPOG generator} for $M$.
\label{SPOGdef} 
\end{define}

\begin{define}[\cite{AD}]
We say that $\A$ is \textbf{dual SPOG} if the module 
$\Omega^1(\A)$ is SPOG. 
\label{dualSPOG}
\end{define}

\begin{theorem}[\cite{AD}]
  Suppose that $\A$ is free. Then $\A^+:=\A\cup\set{H}$ is either free or
  dual SPOG with 
$\POexp(\Omega^1(\A^+))=\exp(\A)$ and level equal to $-\abs{\A^+}+\abs{{\A^{+H}}}$. 
\label{SPOGmain}
\end{theorem}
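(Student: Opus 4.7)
The plan is to construct an explicit SPOG resolution of $\Omega^1(\A)$ by exhibiting $\ell+1$ generators and the single syzygy among them. Choose coordinates so that $\alpha_H=x_1$, set $B:=\prod_{X\in\A^H}\alpha_X$ and $Q:=Q(\A)$, and take as candidate level element
$$\varphi:=\frac{B\,dx_1}{Q}\in\Omega^1(\A),$$
which has degree $|\A^H|-|\A|$. Together with a free basis $\omega_1,\ldots,\omega_\ell$ of $\Omega^1(\A')$ (guaranteed by freeness of $\A'$), these should form the SPOG generating set.

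First I would verify $\Omega^1(\A)=\Omega^1(\A')+S\cdot\varphi$. Given $\omega=\sum_{i=1}^\ell(f_i/Q)\,dx_i\in\Omega^1(\A)$, the logarithmic condition $\omega\wedge d\alpha_H\in(\alpha_H/Q)\Omega^2_V$ forces $x_1\mid f_i$ for $i\ge 2$, while Lemma \ref{Tlemma} yields $f_1=x_1 g_1+hB$ for some $g_1,h\in S$. Setting $\psi:=\omega-h\varphi$, one reads off that $\psi$ has no pole along $x_1$, and the logarithmic conditions for $L\in\A'$ transfer from $\omega\in\Omega^1(\A)$ by a coprimality argument using $\gcd(\alpha_L,\alpha_H)=1$, so $\psi\in\Omega^1(\A')$. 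Next, since $\alpha_H\varphi=B\,dx_1/Q(\A')\in\Omega^1(\A')$, expressing it uniquely in the free basis yields $f_i\in S$ with
$$\sum_{i=1}^\ell f_i\omega_i-\alpha_H\varphi=0$$
in $\Omega^1(\A)$, giving the candidate syzygy.

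The crucial remaining step is to show this is, up to $S$-multiples, the only syzygy. Suppose $\sum g_i\omega_i+g\varphi=0$ in $\Omega^1(\A)$; then $g\varphi\in\Omega^1(\A')$, which means $gB\,dx_1/Q$ has no pole along $H$, forcing $\alpha_H\mid gB$ and hence $\alpha_H\mid g$ by $\gcd(\alpha_H,B)=1$ (using simplicity of $\A$). Writing $g=\alpha_H g'$ and adding $g'\cdot(\sum f_i\omega_i-\alpha_H\varphi)=0$ to the given syzygy eliminates the $\varphi$-component, leaving a relation among the $\omega_i$ alone which must vanish by freeness of $\Omega^1(\A')$. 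Hence every syzygy is a multiple of the explicit one. Minimality is then automatic: $\alpha_H\in\frakm$ and each nonzero $f_i$ has positive degree by degree count, while $\varphi\notin\Omega^1(\A')$ (the coprimality $\gcd(B,\alpha_H)=1$ prevents cancellation of the pole along $H$) ensures the level generator is irredundant. Reading off the shifts gives $\POexp(\Omega^1(\A))=\exp(\A')$ and level $|\A^H|-|\A|$. I expect the cyclicity-of-syzygies step to be the main obstacle, as it hinges on the delicate interplay between the coprimality $\gcd(\alpha_H,B)=1$ (a combinatorial consequence of simplicity) and the freeness of $\Omega^1(\A')$.
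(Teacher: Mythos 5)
Your proof breaks at its foundation: the candidate level element $\varphi=B\,dx_1/Q$ is not logarithmic in general. For $\varphi\in\Omega^1(\A)$ you need, for every $L\in\A'$, that $\varphi\wedge d\alpha_L=\frac{B}{Q}\,dx_1\wedge d\alpha_L\in\frac{\alpha_L}{Q}\Omega^2_V$, which (since $dx_1\wedge d\alpha_L\neq0$) forces $\alpha_L\mid B$. But $B=\prod_{X\in\A^H}\alpha_X$ contains only one chosen representative per flat of $\A^H$, so whenever two hyperplanes $L_1\neq L_2$ of $\A'$ satisfy $L_1\cap H=L_2\cap H$, the non-chosen one violates the condition. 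This happens exactly when $\abs{\A'}>\abs{\A^H}$, i.e.\ whenever the level is less than $-1$ --- precisely the interesting case. (Tiny instance: $\A'=\{x,y,z\}$, $H=\ker(x-y)$; then $B=xz$, say, and the condition at $y=0$ requires $y\mid xz$.) The true level element has $dx_1$-coefficient congruent to $B$ modulo $\alpha_H$ but carries correction terms in all coordinates, and its \emph{existence} is the actual content of the theorem: it is equivalent to surjectivity, onto the generator, of the Euler restriction $\rho\colon D^{\ell-1}(\A)\to D^{\ell-1}(\A^H)$, which is exactly where freeness of $\A'$ enters (Theorem \ref{thm:FST}, then Proposition \ref{plus1} and Lemma \ref{Tlemma}). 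Tellingly, your construction of $\varphi$ and the resulting plus-one generation never use freeness of $\A'$; without such a hypothesis $\Omega^1(\A)$ need not be $\Omega^1(\A')+S\omega_0$ for any single $\omega_0$, so an argument that cheap cannot work. Note also that this paper does not prove Theorem \ref{SPOGmain} --- it is quoted from \cite{AD} --- so the comparison is with the route its toolkit indicates (Theorem \ref{thm:FST}, Proposition \ref{plus1}, Lemma \ref{freeresol}), which is essentially your skeleton with the missing existence step supplied.

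Granting a genuine $\omega_0\in\Omega^1(\A)_{-\abs{\A}+\abs{\A^H}}\setminus\Omega^1(\A')$ whose $dx_1$-coefficient is $B$ modulo $\alpha_H$, your remaining steps (reduction of an arbitrary $\omega$ via Lemma \ref{Tlemma}, the single syzygy $\sum_i f_i\omega_i=\alpha_H\omega_0$, and the cyclicity argument stripping the pole along $H$ and using freeness of $\Omega^1(\A')$) are sound and match Lemma \ref{freeresol}. One further flag: ``minimality is automatic\ldots each nonzero $f_i$ has positive degree by degree count'' is false; $\deg f_i=d_i+1-\abs{\A}+\abs{\A^H}$ can be zero (it is in the Boolean example above), and in that situation some $f_i$ is a unit, $\Omega^1(\A)$ is free, and the resolution is not minimal --- this is exactly the case where $\A$ itself is free, which must be excluded or handled separately for the SPOG conclusion.
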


For a graded $S$-module $M$, let $\widetilde{M}$ denote the corresponding
sheaf on $\P^{\ell-1}=\Proj(S)$.  Since modules of logarithmic forms
are reflexive, we have
\begin{prop}[Prop.\ 2.5, \cite{AD}]
\[
H_*^0(\widetilde{\Omega^1(\A)}):=
\bigoplus_{j \in 
\Z} H^0(\widetilde{\Omega^1(\A)}(j))=
\Omega^1(\A).
\]
\label{gs}
\end{prop}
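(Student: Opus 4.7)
The plan is to deduce this from the standard Serre correspondence between finitely generated graded $S$-modules and coherent sheaves on $\P^{\ell-1}$. For any finitely generated graded $S$-module $M$, one has the four-term exact sequence
\[
0 \to H^0_\frakm(M) \to M \to H_*^0(\widetilde{M}) \to H^1_\frakm(M) \to 0,
\]
where $\frakm=(x_1,\ldots,x_\ell)$ is the irrelevant ideal. Consequently, the natural map $M \to H_*^0(\widetilde{M})$ is an isomorphism in every degree precisely when $\depth_\frakm M \ge 2$. Hence it suffices to establish this depth bound for $M=\Omega^1(\A)$.

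Next I would invoke reflexivity. As recalled in the introduction, $\Omega^1(\A)$ is the $S$-dual of $D(\A)$, and both are finitely generated. Fixing any free presentation $S^a \to S^b \to D(\A) \to 0$ and dualizing yields
\[
0 \to \Omega^1(\A) \to S^b \to S^a,
\]
exhibiting $\Omega^1(\A)$ as a second syzygy over the regular ring $S$. By a standard depth estimate (either via Auslander--Buchsbaum, or by chasing depth through the two short exact sequences extracted from the display above), a second syzygy of a finitely generated $S$-module has $\frakm$-depth at least $\min(2,\ell)$. Assuming $\ell\ge 2$ (the case $\ell=1$ is trivial, since then $\Omega^1(\A)$ is free), we obtain $\depth_\frakm\Omega^1(\A)\ge 2$, and the proposition follows from the four-term sequence above.

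There is no real obstacle here: the proposition is a textbook application of the Serre correspondence together with the depth property of reflexive modules over a polynomial ring. The only point requiring a moment's care is to cite the reflexivity of $\Omega^1(\A)$, which has been recorded earlier in the introduction as part of the duality pairing with $D(\A)$.
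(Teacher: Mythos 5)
Your argument is correct and is essentially the paper's own justification: the paper simply cites \cite[Prop.~2.5]{AD}, with reflexivity of $\Omega^1(\A)$ as the stated reason, and the standard proof behind that citation is exactly your route (dualizing a presentation of $D(\A)$ exhibits $\Omega^1(\A)$ as a second syzygy, hence $\depth_\frakm\Omega^1(\A)\ge 2$ for $\ell\ge 2$, so the four-term local cohomology sequence collapses to the desired isomorphism). One small inaccuracy, immaterial in context: the $\ell=1$ case is not ``trivial'' --- there a free rank-one module has depth $1$ and the statement actually fails, since $H_*^0$ yields $\K[x,x^{-1}]$ up to shift --- but this never arises, as the paper works with $\ell\ge 3$.
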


\begin{theorem}[\cite{A}]
Let $\A$ be a free arrangement in $\K^3$ with $\exp(\A)=(1,a,b)_\le$, i.e., $1 \le a \le b$. Then 
for any $H \in \A$, it holds that $|\A^H|\le a+1$ or $|\A^H|=b+1$.
\label{A}\end{theorem}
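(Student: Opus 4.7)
The plan is to pass from the $3$-dimensional problem to a $2$-dimensional one via the Ziegler restriction. First I would invoke Yoshinaga's theorem in dimension three \cite{Y1}: since $\A$ is free with $\exp(\A)=(1,a,b)$, the Ziegler multiarrangement $(\A^H,m^H)$ in $H\cong\K^2$ is free with exponents $(a,b)$, where $m^H(X):=|\{L\in\A\setminus\{H\}\mid L\cap H=X\}|$. Meanwhile the underlying simple $2$-arrangement $\A^H$ is free with exponents $(1,m-1)$, where $m:=|\A^H|$. Setting $\oS:=S/\alpha_H S$, I fix a homogeneous basis $\theta_E,\theta^*$ of the free $\oS$-module $D(\A^H)$ of degrees $1$ and $m-1$ respectively; since $m^H(X)\ge 1$ for every $X\in\A^H$, there is a natural inclusion $D(\A^H,m^H)\subset D(\A^H)$.

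The main step is to argue the contrapositive: assume $m\ge a+2$ and deduce $m=b+1$. Under this hypothesis $a<m-1$, so every element of $D(\A^H)_a$ has the form $p\theta_E$ for some $p\in\oS_{a-1}$, the generator $\theta^*$ being too high in degree to contribute. In particular, a degree-$a$ generator $\eta_a$ of $D(\A^H,m^H)$ satisfies $\eta_a=p\theta_E$. The defining condition $\eta_a(\alpha_X)\in\alpha_X^{m^H(X)}\oS$ for each $X\in\A^H$, together with $\theta_E(\alpha_X)=\alpha_X$, then forces $\alpha_X^{m^H(X)-1}$ to divide $p$ for every $X$, so $\prod_{X\in\A^H}\alpha_X^{m^H(X)-1}$ divides $p$; comparing degrees yields
\[
a-1\ \ge\ |m^H|-m\ =\ a+b-m,\qquad\text{i.e.,}\ \ m\ge b+1.
\]

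For the matching upper bound $m\le b+1$, I examine the second generator $\eta_b$ of $D(\A^H,m^H)$, which has degree $b$. Expanding $\eta_b=q_1\theta_E+q_2\theta^*$ in the basis of $D(\A^H)$, if $q_2=0$ then both $\eta_a$ and $\eta_b$ lie in the rank-one $\oS$-submodule $\oS\cdot\theta_E$, contradicting that $D(\A^H,m^H)$ has rank two. Hence $q_2\ne 0$, and $\deg q_2=b-(m-1)\ge 0$ gives $m\le b+1$. Combining with the previous inequality forces $m=b+1$, completing the argument.

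The principal obstacle is the reduction from $\A\subset\K^3$ to the $2$-dimensional multiarrangement on $H$ via Yoshinaga's theorem; once that is taken as a black box the remainder is a short comparison of exponent vectors for the nested free $\oS$-modules $D(\A^H,m^H)\subset D(\A^H)$, driven entirely by degree counting. If one wished to avoid invoking Yoshinaga's theorem, the same numerical bounds should be extractable from the Saito determinant of $\A$ divided by $\alpha_H$ and restricted to $H$, though with somewhat heavier bookkeeping.
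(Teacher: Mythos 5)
Your argument is correct, and since the paper itself only cites \cite{A} for Theorem~\ref{A} without reproducing a proof, there is no internal proof to compare against; your route via the Ziegler restriction is essentially the standard one for this statement. The degree bookkeeping all checks out: under $m:=|\A^H|\ge a+2$ the component of $\eta_a$ along the degree-$(m-1)$ generator vanishes for degree reasons, $\eta_a=p\theta_E$ with $p\ne 0$ (it is a basis element), the multiplicity condition forces $\prod_X\alpha_X^{m^H(X)-1}\mid p$ and hence $a-1\ge a+b-m$, and the nonvanishing of the $\theta^*$-component of $\eta_b$ (otherwise $\eta_a,\eta_b$ would lie in the rank-one submodule $\oS\theta_E$) gives $b\ge m-1$, so $m=b+1$. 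Two small points to fix or make explicit: the black box you need --- $\A$ free with $\exp(\A)=(1,a,b)$ implies the Ziegler restriction $(\A^H,m^H)$ is free with exponents $(a,b)$ --- is Ziegler's theorem from \cite{Z}, not Yoshinaga's criterion \cite{Y1}, which is the converse direction (your appeal is to the easy direction, so nothing breaks, but the attribution should be corrected); and the identification of the degree-one generator of $D(\A^H)$ with the Euler derivation, together with $\theta_E(\alpha_X)=\alpha_X$, uses that $m\ge a+2\ge 3$ so that $D(\A^H)_1$ is one-dimensional --- which is exactly the case you are in, but worth saying. With those clarifications the proof is complete and self-contained modulo Ziegler's theorem.
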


The following result is well-known when $m=\one$.  For a proof in general,
see \cite[Lemma\ 2.3]{AD}.
\begin{lemma}
  For all $0\leq p\leq\ell$, 
  \[
  \Omega^p(\A,m) \cong D^{\ell-p}(\A,m)\quad\text{and}\quad
  D^p(\A,m) \cong \Omega^{\ell-p}(\A,m).
  \]
\label{ID}
\end{lemma}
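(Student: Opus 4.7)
The plan is to build the isomorphism from the contraction pairing with the volume form $\omega_0 := dx_1\wedge\cdots\wedge dx_\ell$. Set $Q := Q(\A,m)$. For any $p$, contraction $\theta\mapsto \iota_\theta\omega_0$ is an $S$-linear isomorphism $\wedge^p\Der S \xrightarrow{\sim} \Omega^{\ell-p}_V$: both sides are free of rank $\binom{\ell}{p}$, and the standard basis $\{\partial_I\}_I$ of $\wedge^p\Der S$ maps (up to sign) bijectively to the standard basis $\{dx_{I^c}\}_I$ of $\Omega^{\ell-p}_V$. Dividing the target by $Q$ then yields an $S$-linear isomorphism
\[
\Phi \colon \wedge^p\Der S \xrightarrow{\sim} \tfrac{1}{Q}\Omega^{\ell-p}_V, \qquad \theta \mapsto \iota_\theta\omega_0/Q.
\]

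The main task would be to show that $\Phi$ carries $D^p(\A,m)$ bijectively onto $\Omega^{\ell-p}(\A,m)$. Since both submodules are cut out by a divisibility condition imposed independently at each $H\in\A$, I would fix such an $H$ and verify the match locally. After choosing coordinates so that $\alpha_H = x_1$ and writing $\theta=\sum_{|I|=p} f_I\, \partial_I$, a direct expansion using the fact that $\partial_I(x_1,f_2,\ldots,f_p)$ vanishes identically when $1\notin I$ and spans the space of $(p-1)$-minors in the remaining partials when $1\in I$ would reduce the defining condition for $D^p(\A,m)$ at $H$ to
\[
x_1^{m(H)}\mid f_I \quad \text{for every } I \text{ with } 1\in I.
\]
On the other side, $\iota_\theta\omega_0 = \sum_I \varepsilon_I\, f_I\, dx_{I^c}$ for signs $\varepsilon_I\in\{\pm 1\}$, so the coefficient of $dx_J$ equals $\varepsilon_{J^c} f_{J^c}$. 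Unpacking the defining condition of $\Omega^{\ell-p}(\A,m)$ at $H$ (which says that $\eta\wedge dx_1$ is divisible by $x_1^{m(H)}$, for $\eta = Q\cdot\Phi(\theta)$) shows that $\Phi(\theta)$ lies in $\Omega^{\ell-p}(\A,m)$ locally at $H$ exactly when $x_1^{m(H)}$ divides this coefficient for every $J$ with $1\notin J$. Under the complementation $I\leftrightarrow I^c = J$, the two conditions correspond.

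This gives the first isomorphism; the second is the same assertion with $p$ and $\ell-p$ interchanged. The only slightly delicate point is the bookkeeping of the signs $\varepsilon_I$ in the contraction formula, but since both defining conditions are purely divisibility statements on individual coefficients, these signs play no role, and no real obstacle arises beyond a careful unpacking of the definitions in a basis.
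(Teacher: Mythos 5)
Your argument is correct and is essentially the standard one: the paper itself gives no proof but defers to \cite[Lemma 2.3]{AD} (well-known for $m=\one$ from Saito/Orlik--Terao), where the isomorphism is likewise induced by contraction of $p$-polyvectors against $\omega_0/Q(\A,m)$, with the divisibility conditions at each $H$ matched in $H$-adapted coordinates exactly as you do. The only points worth making explicit are that checking the defining conditions in coordinates with $\alpha_H=x_1$ is legitimate because both conditions and $\Phi$ (up to a unit) are coordinate-free, and that the resulting isomorphism is graded only up to the shift by $\deg Q(\A,m)$ --- neither affects the statement.
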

\begin{prop}[e.g., \cite{AD}]
We have the following exact sequences:
\begin{eqnarray}
0\rightarrow D^{p}(\A') 
\stackrel{\cdot \alpha_H}{\rightarrow } D^{p}(\A) 
\stackrel{\rho^H}{\rightarrow} D^{p}(\A^H) \\
0\rightarrow \Omega^{p}(\A) 
\stackrel{\cdot \alpha_H}{\rightarrow } 
\Omega^{p}(\A') 
\stackrel{\res_H}{\rightarrow} \Omega^{p}(\A^H) 
\end{eqnarray}
\label{es}
\end{prop}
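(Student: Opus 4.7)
The approach is a direct verification of the definitions.

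For the derivation sequence, well-definedness and injectivity of multiplication by $\alpha_H$ are immediate: for $\theta\in D^p(\A')$, the new condition $(\alpha_H\theta)(\alpha_H,\cdots)\in S\alpha_H$ is automatic, and the logarithmic conditions at $L\in\A'$ are preserved. Next, $\rho^H$ is well-defined from $D^p(\A)$ to $\wedge^p\Der\oS$ precisely because $\theta(\alpha_H,\cdots)\in S\alpha_H$ lets $\theta$ descend modulo $\alpha_H$; its image lies in $D^p(\A^H)$ because for each $X=L\cap H$ with $L\in\A'$, the class $\overline{\alpha_L}$ is a nonzero scalar multiple of $\overline{\alpha_X}$, so the $L$-condition descends to the $X$-condition. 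For exactness at $D^p(\A)$: if $\rho^H(\theta)=0$ then $\theta=\alpha_H\theta'$ for some $\theta'\in\wedge^p\Der S$, and $\alpha_H\theta'(\alpha_L,\cdots)=\theta(\alpha_L,\cdots)\in S\alpha_L$ combined with coprimality of $\alpha_H$ and $\alpha_L$ (the arrangement being simple) yields $\theta'\in D^p(\A')$.

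The form sequence follows the same pattern. Multiplication by $\alpha_H$ sends $\omega\in\frac{1}{Q(\A)}\Omega^p_V$ into $\frac{1}{Q(\A')}\Omega^p_V$, and the identity $(Q(\A')/\alpha_L)(\alpha_H\omega)\wedge d\alpha_L=(Q(\A)/\alpha_L)\omega\wedge d\alpha_L$ preserves the logarithmic conditions along $L\in\A'$. For exactness at $\Omega^p(\A')$: if $\res_H(\omega)=0$, writing $\omega=\sum_I f_I\,dx_I/Q(\A')$ with $\alpha_H=x_1$, the vanishing of $\omega|_H$ forces $x_1\mid f_I$ for every $I$ not containing $1$; one then checks that $\omega'=\omega/\alpha_H$ lies in $\frac{1}{Q(\A)}\Omega^p_V$, inherits the $L$-conditions from $\omega$, and satisfies the new $H$-condition because terms of $\omega'$ with $dx_1$ are killed by wedging with $dx_1$ while the remaining coefficients $f_I/x_1$ lie in $S$.

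The delicate point is showing that $\res_H$ lands in $\Omega^p(\A^H)$ with \emph{trivial} multiplicity, rather than in Ziegler's multirestriction $\Omega^p(\A^H,m)$, since several hyperplanes $L_1,\ldots,L_{m(X)}\in\A'$ may meet $H$ in a single $X\in\A^H$. I would handle this locally at a generic point of $X$: choose coordinates in which $\alpha_H=u$ and $\alpha_X=v$, so each relevant $\alpha_{L_i}=a_iu+c_iv$ with $c_i\ne 0$. By the local Saito--Brieskorn description, $\Omega^p(\A')$ is locally generated by wedges of the forms $d\alpha_{L_i}/\alpha_{L_i}$ with regular forms; each such factor restricts to $dv/v$ on $H$, so wedges of two or more log factors restrict to zero and at most one log factor survives per summand, yielding a pole of order at most one along $X$. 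I expect this local analysis to be the main technical obstacle; the remaining algebraic bookkeeping is routine.
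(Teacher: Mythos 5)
Your treatment of the derivation sequence is correct and complete in outline (descent of $\rho^H$ via the condition at $H$, coefficients tested on coordinate tuples to get $\theta=\alpha_H\theta'$, coprimality to land back in $D^p(\A')$), and so are the easy parts of the form sequence: well-definedness and injectivity of $\cdot\alpha_H$, and exactness at $\Omega^p(\A')$ via the divisibility $x_1\mid f_I$ for $1\notin I$. Note the paper itself offers no proof of this proposition (it is cited to \cite{AD}), so the issue is purely whether your argument closes.

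It does not, at exactly the point you flag as delicate. The claim that, at a generic point $\pp$ of $X\in\A^H$, the module $\Omega^p(\A')_\pp$ is generated by wedges of the forms $d\alpha_{L_i}/\alpha_{L_i}$ and regular forms is false as soon as $X$ lies on three or more hyperplanes of $\A'$. Locally at $\pp$ the arrangement is a pencil $\alpha_{L_i}=a_iu+c_iv$ (times a Boolean factor), which is free with $\Omega^1$ needing, besides a degree $-1$ form such as $dQ_X/Q_X$, the generator $(u\,dv-v\,du)/\prod_i\alpha_{L_i}$ of degree $-(m^*(X)-1)$; this element is not an $S_\pp$-combination of the $d\alpha_{L_i}/\alpha_{L_i}$ and regular forms. (Concretely, for the pencil $u,\,u-v,\,u+v$: writing $(v\,du-u\,dv)/Q_X$ as such a combination and comparing $dv$-coefficients after clearing $Q_X$ forces a unit to lie in $\pp S_\pp$, a contradiction.) So the assertion ``at most one log factor survives per summand'' does not cover all of $\Omega^p(\A')$ locally, and the simple-pole claim --- the whole content distinguishing $\Omega^p(\A^H)$ from Ziegler's multirestriction --- is left unproven. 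The intended conclusion can still be reached along your lines, but only by using the true local basis: since the pencil is free, $\Omega^p(\A'_X)_\pp=\wedge^p\Omega^1(\A'_X)_\pp$, and the extra generator restricts to $0$ on $H$ (both $u$ and $du$ vanish on $H$), while $dQ_X/Q_X$ restricts to $m^*(X)\,dv/v$; wedges then have at most a first-order pole along $X$, and one must also verify the logarithmic condition $(\,Q(\A^H)/\alpha_X)\res_H(\omega)\wedge d\alpha_X$ regular --- a point your sketch omits entirely --- before invoking the (legitimate) fact that all defining conditions of $\Omega^p(\A^H)$ are divisibility conditions checkable at codimension-one primes of $\oS$. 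As written, the central step of the form sequence is a gap.
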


\section{Ziegler's conjecture on generic cuts of free arrangements}

In this section, we prove the following.

\begin{theorem}
Conjectures \ref{Zconj} is true.
\label{Zconj2true}
\end{theorem}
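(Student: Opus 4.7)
The plan is to prove Conjecture \ref{Zconj} first, since Conjecture \ref{Zconj3} then follows immediately: if $\A' \cap X$ were free in $X \cong \K^{\dim X}$, its module of logarithmic $1$-forms would be minimally generated by $\dim X$ elements; Conjecture \ref{Zconj} forces this number to equal the minimal generator count of $\Omega^1(\A') \cong \bigoplus_{i=1}^\ell S[d_i]$, which is $\ell$. Since $\A'$ is essential in $V = \K^\ell$ with $\ell \geq 4$ and $X$ is a proper generic subspace with $\dim X > 2$, one has $\dim X < \ell$, a contradiction.

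For Conjecture \ref{Zconj} itself, the plan is to first treat the hyperplane case and then reduce. In the base case $\codim_V X = 1$, set $H := X$ and $\A := \A' \cup \{H\}$; the genericity of $H$ with respect to $\A'$ ensures $\A^H = \A' \cap H$ and $|\A^H| = |\A'|$. Theorem \ref{SPOGmain} then says $\A$ is dual SPOG with $\POexp(\Omega^1(\A)) = \exp(\A')$ and level $-|\A| + |\A^H| = -1$. Combining the resulting minimal free resolution of $\Omega^1(\A)$ with the Euler-type restriction sequence from Proposition \ref{es},
\[
0 \to \Omega^1(\A) \xrightarrow{\cdot \alpha_H} \Omega^1(\A') \xrightarrow{\res_H} \Omega^1(\A^H),
\]
one can hope to deduce surjectivity of $\res_H$ directly: the $\ell$ ``free'' summands of the SPOG resolution correspond under $\cdot \alpha_H$ to the generators of $\Omega^1(\A')$, while the level generator of $\Omega^1(\A)$ maps to an element of $\Omega^1(\A^H)$ completing a generating set. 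A Hilbert series comparison then matches the minimal generator counts on both sides.

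For general codimension $c = \codim_V X$, I would induct on $c$ along a flag $V = X_0 \supset X_1 \supset \cdots \supset X_c = X$ of generic subspaces with $\codim_{X_{i-1}} X_i = 1$. The subtlety is that after the first cut, $\A' \cap X_1$ is typically not free (this is the content of Conjecture \ref{Zconj3}), so the base-case argument, which relied on freeness of $\A'$, does not apply directly to subsequent cuts. This is where Bath's bound on the Castelnuovo--Mumford regularity of $\widetilde{\Omega^1(\A)}$ from \cite{Bath22} enters: once an a-priori combinatorial regularity bound is available, a generic linear section argument shows that the restriction map has the expected behavior in degrees above the regularity threshold, and Proposition \ref{gs} lifts this to an identification at the level of the modules themselves. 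The hardest step, in my view, is the hyperplane base case: extracting surjectivity of $\res_H$ from the SPOG data on $\A$ demands a careful analysis of the connecting homomorphism, since the standard surjectivity result of Theorem \ref{thm:FST2} requires freeness of the larger arrangement $\A$, which fails here.
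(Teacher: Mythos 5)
Your reduction of Conjecture \ref{Zconj3} to Conjecture \ref{Zconj} is fine and matches the logic of the paper (once the number of minimal generators is preserved, a generic cut of rank $<\ell$ cannot be free). But the core of Conjecture \ref{Zconj} is left as a hope rather than a proof, and the two places where you wave your hands are exactly where the real work lies. First, surjectivity of $\res_H$ in the base case: you correctly observe that Theorem \ref{thm:FST2} does not apply because $\A=\A'\cup\{H\}$ is not free, but the SPOG resolution of $\Omega^1(\A)$ plus the exact sequence of Proposition \ref{es} does not by itself yield surjectivity, and a ``Hilbert series comparison'' cannot detect minimal generator counts of the image (a quotient of a module generated in given degrees may well need fewer generators). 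The paper's mechanism is different: genericity of $H$ gives \emph{local} surjectivity of both $\rho\colon D^{\ell-1}(\A)\to D^{\ell-1}(\A^H)$ and $\res_H$ (Proposition \ref{genericlocsurj}), hence short exact sequences of sheaves, and the passage to global surjectivity is by the vanishing $H^1_*(\widetilde{\Omega^1(\A)})=0$, which comes from a projective-dimension bound (Lemma \ref{pdvanish}, Proposition \ref{gs}) obtained via Proposition \ref{plus1} and Lemma \ref{freeresol} ($\Omega^1(\A)=\Omega^1(\A')+S\omega_0$ with $\omega_0$ of degree $-1$, and $\pd_S\Omega^1(\A)\le\max(1,\pd_S\Omega^1(\A'))$). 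None of this is present in your sketch, and without it the base case is not established.

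Second, the induction over a flag of generic hyperplanes cannot run on regularity alone. Bath's bound is used in the paper for one specific purpose: every minimal generator of $\Omega^1$ has strictly negative degree, which rules out the degenerate case in the proof that $\res_H$ carries a minimal generating set of $\Omega^1(\A')$ to a minimal generating set of $\Omega^1(\A^H)$. It is not used as a ``generic linear section in degrees above the regularity threshold'' device, and such an argument would not give you surjectivity in all degrees nor preservation of the minimal number of generators. What actually makes the induction work after the first cut (where freeness is lost) is the quantitative control $\pd_{\oS}\Omega^1(\A^H)\le\pd_S\Omega^1(\A)=\pd_S\Omega^1(\A')$ (part (4) of the paper's Theorem \ref{zconj1general}, via Lemma \ref{freeresol} and an $\Ext$ long exact sequence), so that the hypothesis $\pd<\operatorname{rank}-2$ needed for the cohomological surjectivity argument persists at every step, down to $\dim X\ge 3$. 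Your proposal correctly identifies where the difficulties are, but supplies neither the local-surjectivity-plus-cohomology-vanishing argument nor the projective-dimension bookkeeping, so as it stands it has genuine gaps at both the base case and the inductive step.
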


In fact, we prove a more general result, Theorem~\ref{zconj1general} below.
We begin with some preparatory results.

By a standard argument (e.g., \cite[A4.2]{eisenbook}) we have:
\begin{lemma}
Let $M$ be a finitely generated, graded $S$-module. If $\pd_S M =p$, then 
$H_*^i(\widetilde{M})=0$ for $0<i <\ell-p-1$.
\label{pdvanish}
\end{lemma}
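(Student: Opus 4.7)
The proof relies on two classical facts: the Auslander--Buchsbaum formula, and the standard exact sequence relating local cohomology at the irrelevant ideal $\frakm=(x_1,\ldots,x_\ell)$ to twisted sheaf cohomology on $\P^{\ell-1}$.

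First, I would apply the Auslander--Buchsbaum formula to the graded local ring $(S,\frakm)$: since $\depth S=\ell$, we have
\[
\depth_S M = \ell - \pd_S M = \ell - p.
\]
By the Grothendieck vanishing/non-vanishing theorem for local cohomology, this gives $H^i_\frakm(M)=0$ for all $i<\ell-p$.

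Next, I would translate this into a statement about sheaf cohomology via the standard comparison. For any finitely generated graded $S$-module $M$, there is a four-term exact sequence
\[
0 \to H^0_\frakm(M) \to M \to H^0_*(\widetilde{M}) \to H^1_\frakm(M) \to 0,
\]
together with graded isomorphisms
\[
H^i_*(\widetilde{M}) \cong H^{i+1}_\frakm(M) \quad \text{for all } i\geq 1.
\]
(See for instance \cite[A4.2]{eisenbook}.) Combining with the vanishing from the previous step, $H^{i+1}_\frakm(M)=0$ whenever $i+1<\ell-p$, i.e.\ whenever $i<\ell-p-1$. For $i\geq 1$ this gives $H^i_*(\widetilde{M})=0$ in the claimed range, completing the proof.

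There is no real obstacle here: both ingredients are classical, and the proof is essentially a bookkeeping exercise relating $\pd_S M$, $\depth_S M$, $H^i_\frakm(M)$, and $H^i_*(\widetilde{M})$. The only point to be careful about is the off-by-one shift between local cohomology and sheaf cohomology, and the fact that the $i=0$ case is excluded from the conclusion (as it must be, since $H^0_*(\widetilde{M})$ need not vanish even for modules of large projective dimension).
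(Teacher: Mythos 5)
Your argument is correct and is exactly the ``standard argument'' the paper invokes by citing \cite[A4.2]{eisenbook}: Auslander--Buchsbaum gives $\depth_S M=\ell-p$, hence $H^i_{\frakm}(M)=0$ for $i<\ell-p$, and the comparison $H^i_*(\widetilde{M})\cong H^{i+1}_{\frakm}(M)$ for $i\ge 1$ yields the stated vanishing. The off-by-one bookkeeping and the exclusion of $i=0$ are handled correctly, so there is nothing to add.
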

To state the next lemma, suppose $\A$ is a non-free arrangement,
$H\not\in \A$, and $\A^+:=\A\cup\set{H}$.  Suppose further that
$\omega_1,\ldots,\omega_s$ form a minimal set of generators for
$\Omega^1(\A)$, and there is some 
$\omega_0 \in \Omega^1(\A^+)_{-d} \setminus \Omega^1(\A)$ for which
\[
\Omega^1(\A^+)=\Omega^1(\A)+ S \omega_0.
\]
\begin{lemma}
Suppose that there is a relation of the form
\begin{equation}
\alpha_H \omega_0=\sum_{i=1}^s f_i \omega_i
\label{rel1}
\end{equation}
for 
some polynomials $f_i \in S$.  Then 
if $\Omega^1(\A)$ has a minimal free resolution 
\begin{equation}
0 \rightarrow F_t \rightarrow \cdots \rightarrow F_0 \rightarrow \Omega^1(\A) \rightarrow 0,
\label{eq1}
\end{equation}
then $\Omega^1(\A^+)$ has a free resolution of the form
\begin{equation}
0 \rightarrow F_t \rightarrow \cdots \rightarrow F_2 \rightarrow F_1 
\oplus S[d-1] \rightarrow F_0\oplus S[d] \rightarrow 
\Omega^1(\A^+) \rightarrow 0.
\label{eq2}
\end{equation}
In particular, $\pd_S \Omega^1(\A^+) \le \pd_S \Omega^1(\A)$. 
\label{freeresol}
\end{lemma}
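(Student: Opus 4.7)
The plan is to produce the resolution \eqref{eq2} by a mapping cone construction, after first identifying the cokernel of $\Omega^1(\A')\hookrightarrow\Omega^1(\A)$ as $\oS[d]$, where $\oS=S/\alpha_H S$. The hypothesis $\Omega^1(\A)=\Omega^1(\A')+S\omega_0$ makes the quotient $M:=\Omega^1(\A)/\Omega^1(\A')$ cyclic on $\bar\omega_0$ in degree $-d$, and the relation $\alpha_H\omega_0=\sum f_i\omega_i$ shows $\alpha_H$ annihilates $\bar\omega_0$. Hence $M=\oS[d]/J$ for some graded ideal $J\subseteq \oS$.

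The crux is to prove $J=0$. I would localize at the height-one prime $\pp=(\alpha_H)$ of $S$. By simplicity of $\A$, no hyperplane of $\A'$ contains the generic point of $H$, so locally $\A'$ is trivial: choosing coordinates with $\alpha_H=x_1$, $\Omega^1(\A')_\pp$ is $S_\pp$-free on $dx_1,\dots,dx_\ell$, while $\Omega^1(\A)_\pp$ is $S_\pp$-free on $dx_1/x_1,dx_2,\dots,dx_\ell$. Comparing gives $M_\pp\cong \oS_\pp$, the fraction field of $\oS$. Since $J\subseteq \oS$ localizes to $0$ at the generic point of the domain $\oS$, we conclude $J=0$, yielding the short exact sequence
\[
0\to \Omega^1(\A')\to \Omega^1(\A)\to \oS[d]\to 0.
\]

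The horseshoe lemma applied to this short exact sequence, using \eqref{eq1} and the resolution $0\to S[d-1]\xrightarrow{\alpha_H} S[d]\to \oS[d]\to 0$, then yields a resolution of the shape \eqref{eq2}. Explicitly, one lifts the generator of $S[d]$ to $\omega_0$ and takes the connecting differential $S[d-1]\to F_0\oplus S[d]$ to send $1\mapsto(-f_1,\dots,-f_s,\alpha_H)$, consistent with $\alpha_H\omega_0=\sum f_i\omega_i$; all higher differentials are inherited unchanged from $F_\bullet$, since the right-hand resolution vanishes in homological degree $\ge 2$. The bound $\pd_S\Omega^1(\A)\le\pd_S\Omega^1(\A')$ follows because only positions $0$ and $1$ are augmented, while $\A'$ non-free forces $\pd_S\Omega^1(\A')\ge 1$. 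The main obstacle is the local identification $M\cong \oS[d]$: any direct check that the proposed complex is exact at $F_0\oplus S[d]$ reduces to showing that $g\omega_0\in\Omega^1(\A')$ forces $\alpha_H\mid g$, which seems to require exactly this localization argument.
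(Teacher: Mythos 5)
Your argument is correct, and it reaches the paper's resolution \eqref{eq2} by a somewhat different route. The paper works directly with the presentation: it takes the surjection $F_0\oplus S[d]\to\Omega^1(\A)$ sending the new generator to $\omega_0$ and computes its kernel by hand, the key observation being that any relation $\sum_i g_i\omega_i=g\omega_0$ forces $\alpha_H\mid g$ (the left side is regular along $H$ while $\omega_0$ has a pole there, i.e.\ some numerator coefficient of $\omega_0$ over $Q(\A)$ is not divisible by the prime $\alpha_H$); subtracting $(g/\alpha_H)$ times the given relation \eqref{rel1} then lands one in the image of $F_1$, and splicing with $F_2,\ldots,F_t$ gives \eqref{eq2}. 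You instead package exactly that divisibility statement as the identification $\Omega^1(\A)/\Omega^1(\A')\cong\oS[d]$, proved by localizing at the prime $(\alpha_H)$, where $\Omega^1(\A')_\pp$ and $\Omega^1(\A)_\pp$ have the local normal forms you describe (this is where simplicity, i.e.\ $H\notin\A'$, enters), and then apply the horseshoe lemma to
\[
0\to\Omega^1(\A')\to\Omega^1(\A)\to\oS[d]\to0
\]
with the Koszul resolution $0\to S[d-1]\xrightarrow{\cdot\alpha_H}S[d]\to\oS[d]\to0$; your description of the connecting map $1\mapsto(-f_1,\ldots,-f_s,\alpha_H)$ and of the unchanged higher differentials matches the paper's complex. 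The trade-off: the paper's pole-order remark is a one-line elementary substitute for your localization computation, while your version makes the short exact sequence with cokernel $\oS[d]$ explicit, which explains structurally why only homological degrees $0$ and $1$ are augmented. You also correctly note that the final inequality $\pd_S\Omega^1(\A)\le\pd_S\Omega^1(\A')$ uses the standing assumption that $\A'$ is not free (so $t\ge1$), a caveat the paper itself invokes when applying the lemma.
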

\begin{proof}
Let us determine the kernel $K$ of the surjection $F_0 \oplus S[d] \rightarrow 
\Omega^1(\A^+)$ that sends the generator of $S[d]$ to $\omega_0$.
It is clear that the image of 
$F_1$ and $\alpha_H \omega_0=\sum_{i=1}^s f_i \omega_i$ are contained in $K$. 
A new relation in $K$ would be of the form 
\[
\sum_{i=1}^s g_i \omega_i=g\omega_0,
\]
for some $g$ and $g_i$'s in $S$.
Since the left-hand side is regular along $H$ and $\omega_0$ is not, 
we may replace $g$ by $g\alpha_H$. Then by \eqref{rel1}, it holds that 
\[
0=\sum_{i=1}^s g_i\omega_i-g\alpha_H \omega_0=
\sum_{i=1}^s g_i \omega_i-g \sum_{i=1}^s f_i \omega_i=
\sum_{i=1}^s (g_i-gf_i )\omega_i,
\]
which belongs to the image of $F_1$. 
It follows that
\[
F_1 \oplus S[d-1] \rightarrow F_0 \oplus S[d] \rightarrow \Omega^1(\A)
\]
is exact.  Since the restriction to $S[d-1]$ is injective, $F_2$ maps
onto the kernel, and \eqref{eq2} is exact.
\end{proof}
Recall that a hyperplane $H\in\A$ is generic with respect to $\A':=\A\setminus
\set{H}$ provided that $\codim(X\cap H)>\codim X$ for each nonzero flat
$X\in L(\A')$ (Definition~\ref{kgeneric}).  In this case, the matroid of
$\A^H$ is the complete principal truncation of that of $\A'$; in particular,
$L_k(\A')\cong L_k(\A^H)$ for all $0\leq k\leq \ell-2$.

\begin{prop}
  If $H\in\A$ is a generic hyperplane with respect to $\A'$, then
  for all $p\geq1$,
the restriction maps $D^p(\A) \rightarrow D^p(\A^H)$ and 
$\Omega^p(\A') \rightarrow \Omega^p(\A^H)$ 
are locally surjective.
That is, for any point $\pp \in \P^{\ell-1}=\Proj(S)$, 
the maps are surjective when localized at $\pp$.
\label{genericlocsurj}
\end{prop}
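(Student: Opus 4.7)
The plan is to verify surjectivity stalk-by-stalk using the standard localization property of logarithmic modules: for any prime $\pp$ of $S$, one has $D^p(\A)_\pp = D^p(\A_\pp)_\pp$ and $\Omega^p(\A)_\pp = \Omega^p(\A_\pp)_\pp$, where $\A_\pp := \{L \in \A : \alpha_L \in \pp\}$, because the defining divisibility condition at any $L \notin \A_\pp$ becomes vacuous once $\alpha_L$ is inverted in $S_\pp$. Thus I may replace $\A$ with $\A_\pp$. If $\alpha_H \notin \pp$, then $\overline{S}_\pp = 0$ and both target modules vanish, so the claim is automatic; I will therefore assume $\alpha_H \in \pp$. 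Then $H \in \A_\pp$, and the genericity of $H$ with respect to $\A'$ is inherited by $\A'_\pp$ since $L(\A'_\pp) \subseteq L(\A')$.

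Next I would analyze the structure of $\A_\pp$ via its centers. Let $Y := \bigcap_{L \in \A_\pp} L$ and $X := \bigcap_{L \in \A'_\pp} L$ (with $X := V$ when $\A'_\pp = \emptyset$), and set $r := \codim Y$. The key consequence of the genericity of $H$ is that $Y = X \cap H$ and $\codim X = r - 1$, so $H$ meets $X$ transversally. Choose linear coordinates $x_1, \ldots, x_\ell$ on $V$ with $X = \{x_1 = \cdots = x_{r-1} = 0\}$ and $\alpha_H = x_r$; then every $L \in \A'_\pp$ is cut out by a linear form in $x_1, \ldots, x_{r-1}$ alone, and $\A_\pp$ acquires a prism structure in which $H$ is transverse to the $x_r$-direction while the coordinates $x_{r+1}, \ldots, x_\ell$ are trivial.

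In these coordinates the logarithmic modules split transparently. A direct computation with the defining conditions shows that every element of $\Omega^1(\A'_\pp)$ has the form
\[
\frac{1}{Q(\mathcal{B})} \sum_{i < r} a_i\, dx_i + b\, dx_r + \sum_{i > r} c_i\, dx_i,
\]
where $\sum_{i<r} a_i\, dx_i / Q(\mathcal{B})$ is a logarithmic 1-form for the essentialization $\mathcal{B}$ of $\A'_\pp$ (regarded with $S$-coefficients) and $b, c_i \in S$ are arbitrary; the map $\res_H$ sets $x_r = 0$ and discards the $dx_r$-term, so any $\tau \in \Omega^1((\A_\pp)^H)$ is hit by its $x_r$-independent lift. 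The same argument works for $D^1$ with $\partial_{x_r}$ in place of $dx_r$, and extends to $p \geq 2$ by the analogous wedge-product decomposition. This produces global surjectivity of $\rho^H$ and $\res_H$ on $\A_\pp$ as $S$-module maps, and in particular the required local surjectivity at $\pp$.

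The main delicate point will be the multilinear bookkeeping for $p \geq 2$: one has to match the wedge decomposition of $D^p$ and $\Omega^p$ on the prism arrangement with the Euler restriction. But because $H$ is the unique hyperplane of $\A_\pp$ involving $x_r$, the $\partial_{x_r}$- (respectively $dx_r$-) factors separate cleanly, and the $x_r$-independent lifts of wedges of base elements continue to provide sections of $\rho^H$ (respectively $\res_H$) for every $p \geq 1$.
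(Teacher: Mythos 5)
Your proposal is correct and follows essentially the same route as the paper's proof: localize at a point, use genericity to conclude that the local subarrangement of $\A'$ is transverse to $H$, choose coordinates so that $\alpha_H$ is a coordinate variable not occurring in the defining forms of the localized subarrangement, and lift restricted logarithmic derivations and forms by their $\alpha_H$-independent representatives. Your extra details (the explicit product/prism decomposition of $\Omega^1(\A'_\pp)$ and the trivial case $\alpha_H\notin\pp$) only make explicit what the paper leaves implicit.
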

\begin{proof}  
  First consider the logarithmic derivation case, and fix a flat
  $X\in L(\A')$ with $\codim(X)\leq \ell-2$.
  By the genericity assumption, we may choose
  coordinates so that $H$ is given by $x_\ell=0$ and $\A'_X$ is
  an arrangement in $\spec(\K[x_1,\ldots,x_{\ell-1}])$.
  Then any $\theta \in D^p((\A^H)_X)$ has a 
  lift to some $\tilde{\theta} \in D^p(\A')$, and
$\partial_{x_\ell}$ does not appear in the expression of
$\tilde{\theta}$. That is, $\rho\colon D^p(\A'_X) \rightarrow D^p((\A^H)_X)$ is
surjective, and the analogous argument works for the logarithmic differential
forms.   
\end{proof}
\begin{prop}
If $H\in \A$ has the property that the restriction
$\rho\colon D^{\ell-1}(\A) \rightarrow D^{\ell-1}(\A^H)$ is 
surjective, then there is some
$\omega_0 \in \Omega^1(\A)_{-d} \setminus \Omega^1(\A')$ for which
\[
\Omega^1(\A)=\Omega^1(\A')+ S \omega_0,
\]
where $d:=|\A|-|\A^H|$ and $\A':=\A\setminus{H}$.
\label{plus1}
\end{prop}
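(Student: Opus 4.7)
The plan is to use the duality of Lemma~\ref{ID} to translate the hypothesis into a surjectivity statement on the $\Omega^1$-side, then construct $\omega_0$ via the Strong Preparation Lemma (Lemma~\ref{Tlemma}).

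First, choose coordinates so that $\alpha_H = x_1$, and set $\oS = S/x_1 S$. For any $\omega = \sum_{i=1}^\ell (f_i/Q)\,dx_i \in \Omega^1(\A)$, the logarithmic condition at $H$ forces $x_1 \mid f_i$ for $i\geq 2$, and Lemma~\ref{Tlemma} gives $f_1 \in (x_1, B)$ with $B = \prod_{X\in\A^H}\alpha_X$ of degree $|\A^H|$. Consequently the ``residue'' map $\res\colon \Omega^1(\A)\to\oS$, $\omega\mapsto\overline{f_1}$, is $S$-linear, takes values in the principal ideal $(\overline{B})\subset\oS$, and has kernel exactly $\Omega^1(\A')$ (since $\omega\in\Omega^1(\A')$ iff $x_1\mid f_i$ for all $i$).

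The key step is to deduce from the hypothesis that $\res$ surjects onto $(\overline{B})$. For this I would make the iso of Lemma~\ref{ID} explicit: $\omega = \sum (f_i/Q)\,dx_i \in \Omega^1(\A)$ corresponds to $\theta = \sum f_i e_i \in D^{\ell-1}(\A)$, where $e_i = (-1)^{i-1}\partial_{x_1}\wedge\cdots\widehat{\partial_{x_i}}\cdots\wedge\partial_{x_\ell}$; this shifts graded degree by $|\A|$. Under this identification the Euler restriction $\rho^H$ agrees with $\res$: only the $i=1$ term survives modulo $x_1$ (the others involve $\partial_{x_1}$), so $\rho^H(\theta) = \overline{f_1}\cdot\partial_{x_2}\wedge\cdots\wedge\partial_{x_\ell}$. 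Since $\A^H$ sits in $H\cong\K^{\ell-1}$, the top module $D^{\ell-1}(\A^H)$ is cyclic, namely $\oS[-|\A^H|]$ with generator $Q(\A^H)\cdot\partial_{x_2}\wedge\cdots\wedge\partial_{x_\ell}=\overline{B}\cdot\partial_{x_2}\wedge\cdots\wedge\partial_{x_\ell}$. The hypothesized surjectivity of $\rho^H$ therefore yields surjectivity of $\res$ onto $(\overline{B})$.

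Finally, pick $\omega_0\in\Omega^1(\A)$ with $\res(\omega_0)=\overline{B}$. Since $\res$ carries $\Omega^1(\A)_k$ into $\oS_{k+|\A|}$ and $\overline{B}\in\oS_{|\A^H|}$, we obtain $\omega_0\in\Omega^1(\A)_{|\A^H|-|\A|}=\Omega^1(\A)_{-d}$. For arbitrary $\omega\in\Omega^1(\A)$, write $\res(\omega)=\overline{c}\cdot\overline{B}$ for some $\overline{c}\in\oS$, and lift $\overline{c}$ to $\tilde{c}\in S$ of appropriate degree; then $\res(\omega-\tilde{c}\,\omega_0)=\overline{c}\,\overline{B}-\overline{\tilde{c}}\,\overline{B}=0$, so $\omega-\tilde{c}\,\omega_0\in\Omega^1(\A')$, whence $\Omega^1(\A)=\Omega^1(\A')+S\omega_0$. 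The main obstacle is the middle step: verifying concretely that the assignment $\omega\leftrightarrow\theta$ carries $\Omega^1(\A)$ bijectively onto $D^{\ell-1}(\A)$ (i.e.\ that the two logarithmic conditions match up) and intertwines the restriction maps with the residue; once this correspondence is established, the rest is degree-tracking together with an application of Lemma~\ref{Tlemma}.
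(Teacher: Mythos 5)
Your argument is correct and is essentially the paper's own proof: both exploit the duality $\Omega^1(\A)\cong D^{\ell-1}(\A)$ of Lemma~\ref{ID}, the fact that $D^{\ell-1}(\A^H)$ is cyclic with generator $Q(\A^H)\,\partial_{x_2}\wedge\cdots\wedge\partial_{x_\ell}$, the hypothesized surjectivity to lift this generator and dualize it to a form $\omega_0$ with a pole along $H$, and then Lemma~\ref{Tlemma} to conclude $\Omega^1(\A)=\Omega^1(\A')+S\omega_0$. You merely make explicit the steps the paper leaves implicit (the coefficient description of the duality, that it intertwines $\rho^H$ with the residue $\omega\mapsto\overline{f_1}$, and the final subtraction argument using $f_1\in(\alpha_H,B)$), all of which check out, including the degree count $\omega_0\in\Omega^1(\A)_{-d}$.
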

\begin{proof}
  Choosing coordinates appropriately, we let $\alpha_H=x_1$.
  Then $\theta_0:= Q(\A^H) \wedge_{i=2}^\ell \partial_{x_i}$ generates
  $D^{\ell-1}(\A^H)$, since $\dim_\K H=\ell-1$. By surjectivity, there
  is some $\theta \in D^{\ell-1}(\A)_{|\A^H|}$ for which 
  $\rho(\theta)=\theta_0$.
  
  Note that the coefficient $Q(\A^H)$ of $\theta_0$
  is not divisible by $x_1$ since $\rho(\theta)=\theta_0 \neq 0$.
  Thus the differential form $\omega_0 \in \Omega^1(\A)_{-|\A|+|\A^H|}=
  \Omega^1(\A)_{-1}$ dual to $\theta$ (see Lemma~\ref{ID})
  is not regular along $H$. By Lemma \ref{Tlemma}, we have
\[
\Omega^1(\A)=\Omega^1(\A')+ S \omega_0,
\]
which completes the proof.
\end{proof}

The following technical result lies at the core of our proof of Ziegler's
Conjecture~\ref{Zconj}.

\begin{theorem}
  Suppose $\A$ is an essential arrangement of rank $\ell\geq4$, and
  $H\in\A$ is generic with respect to $\A':=\A\setminus\set{H}$.
  If $\pd_S \Omega^1(\A')<\ell-2$, then
\begin{itemize}
    \item [(1)]
$\res_H\colon \Omega^1(\A') \rightarrow \Omega^1(\A^H)$ is surjective, 
\item[(2)]
a minimal set of generators for $\Omega^1(\A')$ is sent to 
a minimal set of generators 
for 
$\Omega^1(\A^H)$. Thus $\A^H$ not free, and 
\item[(3)]
there is $\omega \in \Omega^1(\A)_{-1} \setminus \Omega^1(\A')$ such that 
$\omega,\omega_1,\ldots,\omega_s$ form a minimal set of generators for 
$\Omega^1(\A)$, where $\omega_1,\ldots,\omega_s$ is a minimal set of generators
for $\Omega^1(\A')$.

\item[(4)] 
If $\A'$ is free, then
$\pd_S \Omega^1(\A)=\pd_{S/\alpha_H S} \Omega^1(\A^H)=1$.
If $\A'$ is not free, then we have
\[
1 \le \pd_{S/\alpha_H S} \Omega^1(\A^H)\le  
\pd_S \Omega^1(\A)=\pd_S \Omega^1(\A').
\]
\end{itemize}
\label{zconj1general}
\end{theorem}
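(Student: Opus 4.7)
The engine of the proof is Lemma~\ref{pdvanish}: the hypothesis $\pd_S\Omega^1(\A')<\ell-2$ gives $H^1_*(\widetilde{\Omega^1(\A')})=0$, and via Lemma~\ref{ID} also $H^1_*(\widetilde{D^{\ell-1}(\A')})=0$. I plan a two-stage bootstrap, handling $D^{\ell-1}$ before $\Omega^1$. First I would sheafify the derivation short exact sequence of Proposition~\ref{es} for $p=\ell-1$: Proposition~\ref{genericlocsurj} promotes it to a short exact sequence of coherent sheaves on $\P^{\ell-1}$, and its cohomology long exact sequence combined with the above vanishing yields global surjectivity $D^{\ell-1}(\A)\to D^{\ell-1}(\A^H)$. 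Proposition~\ref{plus1} then produces $\omega_0\in\Omega^1(\A)_{-1}\setminus\Omega^1(\A')$ with $\Omega^1(\A)=\Omega^1(\A')+S\omega_0$. Feeding any relation $\alpha_H\omega_0=\sum_i f_i\omega_i$ into Lemma~\ref{freeresol} yields $\pd_S\Omega^1(\A)\le\pd_S\Omega^1(\A')<\ell-2$, so Lemma~\ref{pdvanish} now also supplies $H^1_*(\widetilde{\Omega^1(\A)})=0$. Sheafifying the form short exact sequence and running the identical cohomology argument then proves (1).

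For (2), I would tensor the now short exact sequence $0\to\Omega^1(\A)\xrightarrow{\cdot\alpha_H}\Omega^1(\A')\xrightarrow{\res_H}\Omega^1(\A^H)\to 0$ with $\K=S/\frakm$. Since $\alpha_H\in\frakm$, the first map vanishes after tensoring, and $\res_H$ induces an isomorphism $\Omega^1(\A')/\frakm\Omega^1(\A')\xrightarrow{\sim}\Omega^1(\A^H)/\frakm\Omega^1(\A^H)$ of graded $\K$-vector spaces (using $\oS/\overline{\frakm}=\K$). Hence a minimal generating set of $\Omega^1(\A')$ maps bijectively to one of $\Omega^1(\A^H)$. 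Since $\Omega^1(\A')$ has rank $\ell$ and thus requires at least $\ell$ generators, while $\Omega^1(\A^H)$ has rank $\ell-1$, the latter cannot be free.

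For (3), $\omega_0,\omega_1,\ldots,\omega_s$ generate $\Omega^1(\A)$ by Proposition~\ref{plus1}; to see the augmented resolution from Lemma~\ref{freeresol} is minimal, I need each entry of the new column $(-f_1,\ldots,-f_s,\alpha_H)^T$ to lie in $\frakm$. The entry $\alpha_H$ clearly does. Suppose for contradiction that some $f_{i_0}$ is a unit; solve $\alpha_H\omega_0=\sum_i f_i\omega_i$ for $\omega_{i_0}$ and apply $\res_H$, which annihilates $\alpha_H\omega_0\in\alpha_H\Omega^1(\A)$: one obtains $\res_H(\omega_{i_0})=-\sum_{i\ne i_0}(f_i/f_{i_0})\res_H(\omega_i)$ in $\Omega^1(\A^H)$. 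Reducing modulo $\frakm$ exhibits $\overline{\res_H(\omega_{i_0})}$ as a $\K$-linear combination of the other $\overline{\res_H(\omega_i)}$ (or forces it to be zero), contradicting the linear independence of minimal generators established in (2). Hence every $f_i\in\frakm$, and $\omega_0,\omega_1,\ldots,\omega_s$ is a minimal generating set.

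For (4), the free case follows from Theorem~\ref{SPOGmain}: the SPOG resolution has length one, so $\pd_S\Omega^1(\A)=1$; combining the short exact sequence with the identity $\pd_S M=\pd_{S/\alpha_H S}M+1$ for $S/\alpha_H S$-modules and non-freeness of $\A^H$ forces $\pd_{S/\alpha_H S}\Omega^1(\A^H)=1$. In the non-free case, (3) shows the augmented resolution is minimal, so $\pd_S\Omega^1(\A)=\pd_S\Omega^1(\A')$; the upper bound $\pd_{S/\alpha_H S}\Omega^1(\A^H)\le\pd_S\Omega^1(\A)$ comes from the standard projective-dimension estimate applied to the short exact sequence plus the change-of-rings identity, and the lower bound $\ge 1$ is just non-freeness of $\A^H$. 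The hardest step, I expect, is the bootstrap at the outset: the hypothesis controls only $\pd_S\Omega^1(\A')$, yet we need cohomological vanishing on the strictly larger $\Omega^1(\A)$, forcing the detour through $D^{\ell-1}$ to construct $\omega_0$ before any direct control of $\Omega^1(\A)$ is available.
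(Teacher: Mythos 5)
Your treatment of (1), (3) and (4) follows essentially the paper's route: local surjectivity (Proposition~\ref{genericlocsurj}) plus the $H^1$-vanishing from Lemma~\ref{pdvanish}, then Proposition~\ref{plus1}, Lemma~\ref{freeresol}, and the change-of-rings bookkeeping for the projective dimensions. One cosmetic slip: when $\A'$ is free, Lemma~\ref{freeresol} is stated for non-free $\A'$ and the inequality $\pd_S\Omega^1(\A)\le\pd_S\Omega^1(\A')$ is false (the left side equals $1$ by Theorem~\ref{SPOGmain}); the bootstrap only needs $\pd_S\Omega^1(\A)\le 1\le\ell-3$, so this is easily repaired, and the paper separates this case explicitly.

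However, your argument for (2) has a genuine gap, and it occurs precisely at the step where the paper needs new input. You claim that after tensoring $0\to\Omega^1(\A)\xrightarrow{\cdot\alpha_H}\Omega^1(\A')\xrightarrow{\res_H}\Omega^1(\A^H)\to0$ with $\K=S/\frakm$, the first map vanishes ``since $\alpha_H\in\frakm$.'' That inference is invalid: the map is multiplication by $\alpha_H$ between two \emph{different} modules, so its image is $\alpha_H\Omega^1(\A)$, which is not a priori contained in $\frakm\,\Omega^1(\A')$ --- an element $\alpha_H\omega$ with $\omega\notin\Omega^1(\A')$ cannot be rewritten as $\alpha_H$ times an element of the target. (Toy example: $S\to\alpha_H S$, $a\mapsto\alpha_H a$, is an isomorphism of modules and does not die modulo $\frakm$.) In fact, by right-exactness of $-\otimes\K$, the vanishing you assert is \emph{equivalent} to statement (2), so as written the argument is circular. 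Concretely, since $\Omega^1(\A)=\Omega^1(\A')+S\omega_0$ with $\deg\omega_0=-1$, everything reduces to whether the degree-zero element $\alpha_H\omega_0$ lies in $\frakm\,\Omega^1(\A')$, i.e.\ whether $\Omega^1(\A')$ has no minimal generator of degree $\geq 0$; this is not vacuous, since for example $dx_i\in\Omega^1(\A')_0$. This is exactly the point where the paper invokes Bath's regularity bound (\cite[Theorem~2.29]{Bath22}) to conclude that every minimal generator has negative degree, and your proposal never supplies that ingredient. Since your minimality argument in (3) (each $f_i\in\frakm$) is deduced from (2), the gap propagates there as well; note the paper instead proves (3) directly by a pole/degree argument. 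With Bath's bound (or any substitute showing $\Omega^1(\A')_0\subseteq\frakm\,\Omega^1(\A')$), your tensoring argument can be repaired and then becomes a tidy reformulation of the paper's proof of (2).
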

\begin{proof}
We will prove (1) and then (3), followed by (4) and (2).
Let $\omega_1,\ldots,\omega_s\ (s\ge \ell) $ be a minimal set of 
generators for $\Omega^1(\A')$.  By 
Proposition \ref{genericlocsurj}, both restrictions 
$\rho\colon D^{\ell-1}(\A) \rightarrow D^{\ell-1}(\A^H)$ and
$\res_H\colon\Omega^1(\A') \rightarrow \Omega^1(\A^H)$ are locally surjective,
giving two short exact sequences
\[
0 \rightarrow \widetilde{D^{\ell-1}(\A')} 
\stackrel{\cdot \alpha_H}{\rightarrow} \widetilde{D^{\ell-1}(\A)}
\stackrel{\rho}{\rightarrow} \widetilde{D^{\ell-1}(\A^H)} \rightarrow 0
\]
and
\[
0 \rightarrow \widetilde{\Omega^1(\A)} 
\stackrel{\cdot \alpha_H}{\rightarrow} \widetilde{\Omega^1(\A')}
\stackrel{\res_H}{\rightarrow} \widetilde{\Omega^1(\A^H)} \rightarrow 0.
\]
Since $\pd_S D^{\ell-1}(\A')=\pd_S \Omega^1(\A')<\ell-2$, Lemma~\ref{pdvanish}
shows that 
\[
H_*^1(\widetilde{D^{\ell-1}(\A'))}=0.
\]
Hence Proposition \ref{gs} shows that 
\[
0 \rightarrow D^{\ell-1}(\A')
\stackrel{\cdot \alpha_H}{\rightarrow} D^{\ell-1}(\A)
\stackrel{\rho}{\rightarrow} D^{\ell-1}(\A^H) \rightarrow 0.
\]
By Proposition \ref{plus1}, there exists some
$\omega \in \Omega^1(\A)_{-1}$ for which
\[
\Omega^1(\A)=\Omega^1(\A')+S\omega.
\]
Thus Lemma \ref{freeresol} shows that
$\pd_S \Omega^1(\A') \ge \pd_S \Omega^1(\A)$ unless $\A'$ is free, and
$\pd_S\Omega^1(\A)=1$ if $\A'$ is free by Theorem \ref{dualSPOG}. 
We apply Proposition \ref{gs} and Lemma \ref{pdvanish} 
together with $\pd_S \Omega^1(\A) \le \ell-3$ to obtain that
$H_*^1(\widetilde{\Omega^1(\A))}=0$.   
Taking global sections, we obtain the exact sequence
\[
0 \rightarrow \Omega^1(\A)
\stackrel{\cdot \alpha_H}{\rightarrow} \Omega^1(\A')
\stackrel{\res_H}{\rightarrow} \Omega^1(\A^H) \rightarrow 0,
\]
proving (1). Next, let us show (3): that is, 
$\set{\omega,\omega_1,\ldots,\omega_s}$ forms a minimal set of generators for
$\Omega^1(\A)$. By construction, $\omega_i$ is regular along
$H$. Thus $\omega_1,\ldots,\omega_s$ cannot generate
$\Omega^1(\A)$. By way of contradiction, suppose, without loss, that
$\set{\omega,\omega_1,\ldots,\omega_{s-1}}$
generate $\Omega^1(\A)$. 
Let
$$
\omega_s=\sum_{i=1}^{s-1} f_i \omega_i+f \omega.
$$
Since only $\omega$ has a pole along $H$, it holds that $f=f'\alpha_H$ for some homogeneous polynomial $f'$. If 
$\deg f > 0$, then $\deg f\omega\ge 0$, contradicting $\deg \omega_s<0$. 
Hence $\deg f=0$, $\deg \omega_s=-1$, and
$\omega_s=a \omega+\sum_{i=1}^{s-1} f_i \omega_i$ for some $a \in \K$
and elements $f_i \in S$.  So $\omega_s$ has a pole along $H$, a
contradiction.  We conclude $\set{\omega,\omega_1,\ldots,\omega_s}$
forms a minimal set of generators for $\Omega^1(\A)$.

Let us prove assertion (4) regarding
$\pd_S\Omega^1(\A)$ and $\pd_S\Omega^1(\A')$.
If $\A'$ is free, we know that $\A$ is SPOG by Theorem \ref{dualSPOG}.
Now assume that $\A'$ is not free.
Continuing the same notation, suppose 
\[
\sum_{i=1}^s f_i \omega_i+\alpha_H \omega=0
\]
is a relation among $\{\omega_i\}_{i=1}^s \cup \{\omega\}$ in $\Omega^1(\A')$.
Let
\begin{equation}
0 \rightarrow F_t \stackrel{\partial_t}{\rightarrow} \cdots 
\stackrel{\partial_1}{\rightarrow}F_0 \stackrel{\partial_0}{\rightarrow} \Omega^1(\A') \rightarrow 0
\label{eq3}
\end{equation}
be a minimal free resolution of $\Omega^1(\A')$ with $s$ minimal generators
as shown above. Then we can construct a 
free resolution 
\begin{equation}
0 \rightarrow F_t \stackrel{\partial_t}{\rightarrow} \cdots 
\stackrel{\partial_2}{\rightarrow}
F_1 \oplus S 
\stackrel{\partial_1 \oplus E_1}{\rightarrow}F_0 \oplus S[1] \stackrel{\partial_0\oplus E_0}{\rightarrow} \Omega^1(\A) \rightarrow 0,
\label{eq4}
\end{equation}
by letting $E_0$ send the generator of $S[1]$ to $\omega$, and defining
$E_1\colon S \rightarrow F_0\oplus S[1]$ by sending the generator to
$(f_1,\ldots,f_s,\alpha_H)$.
By Lemma~\ref{freeresol}, we see
\eqref{eq4} is a free resolution, and its minimality follows from
that of \eqref{eq3}, noting that each differential can be represented by a
matrix with entries with strictly positive degree.

Hence we have $\pd_S \Omega^1(\A)=\pd_S \Omega^1(\A')$.
We defer the remaining part of (4), regarding the projective dimension of
$\Omega^1(\A^H)$, to the end.

Next let us show (2): that is, 
$\res_H(\omega_1),\ldots,\res_H(\omega_s)$ form a minimal set of
generators for $\Omega^1(\A^H)$. Assume not, and say that
\[
\res_H(\omega_s)=\sum_{i=1}^{s-1} \res_H(f_i \omega_i).
\]
Then 
\[
\omega_s-\sum_{i=1}^{s-1} f_i \omega_i =\alpha_H \omega_0
\]
for some $\omega_0 \in \Omega^1(\A)_{(\deg \omega_s)-1}$. If $\omega_0$ is regular along $H$, then 
$\omega_0 \in \langle \omega_1,\ldots,\omega_{s-1}\rangle_S$ since $\deg \omega_0=\deg \omega_s-1$.
Hence $\omega_1,\ldots,\omega_{s-1}$ form a set of generators for $\Omega^1(\A')$, a contradiction. 
Thus $\omega_0$ is not regular along $H$. So 
by using a minimal set of generators $\omega,\omega_1,\ldots,\omega_s$ for $\Omega^1(\A)$ exhibited above, 
we may write
\[
\omega_0=g\omega+\sum_{i=1}^sg_i\omega_i
\]
for some $g$.  Since $\deg \omega=-1$, $\deg \omega_s=\deg \omega+1=0$.

However, Bath's regularity bound \cite[Theorem\ 2.29]{Bath22}
(with our grading convention) implies that each minimal generator has negative
degree, a contradiction.  Since $s \ge \ell>\rank\A^H$, the arrangement
$\A^H$ cannot be free.

Finally, we prove the last asertion from (4).  If $\A'$ is not free, then
by (2) we know that $\A^H$ is also not free.  The result follows by taking
the long exact sequence of $\Ext(-,S)$ applied to
\[
0 \rightarrow \Omega^1(\A)
\stackrel{\cdot \alpha_H}{\rightarrow} \Omega^1(\A')
\stackrel{\res_H}{\rightarrow} \Omega^1(\A^H) \rightarrow 0,
\]
noting that $\pd_{S/\alpha_H}M=(\pd_S M)-1$ for $S/\alpha_H$-modules $M$.

\end{proof}

Now we can prove a more general version of Ziegler's Conjecture \ref{Zconj} and 
Theorem \ref{Zconj3}.

\begin{theorem}
  Let $\A$ be an arrangement of rank $\ell$,
  and $X$ a generic subspace with respect to $\A$
  of dimension $k\geq 3$.  If $p:=\pd_S\Omega^1(\A)\leq k-2$, 
  then $\A\cap X$ is not free.
\label{zconjverygeneral}
\end{theorem}
\begin{proof}

  We argue by induction on $\codim X=\ell-k$ and prove the more precise
  statement that $1\leq \pd_S\Omega^1(\A\cap X)\leq p$.
  If $\ell-k=1$, $X$ is a generic hyperplane and $\ell\geq 4$.  By
  hypothesis, $p\leq (\dim X)-2=\ell-3$, so
  this inequality follows directly from Theorem~\ref{zconj1general}(4).

  If $\ell-k\geq2$, we choose generic hyperplanes
  $\set{H_i}_{1\leq i\leq \ell-k}$ with the property that
  $X=H_1 \cap \cdots \cap H_{\ell-k}$.  Let $Y=H_1\cap \cdots H_{\ell-k-1}$,
  and $\B=\A\cap Y$, an arrangement of rank $k+1$.  By induction,
  \[
  \pd_S\Omega^1(\B)\leq p\leq k-2=\rank\B-3,
  \]
  so we may apply Theorem~\ref{zconj1general}(4)
  again to conclude $\B\cap H_{\ell-k}=\A\cap X$ is not free, and
  moreover s $\pd_S(\Omega^1(\A\cap X)\leq p$.
\end{proof}
\begin{proof}[Proof of Theorem \ref{Zconj2true}]
  By Theorem \ref{zconj1general}, the minimal number of generators of
  $\Omega^1(\A')$ is unchanged by restriction, and Ziegler's conjectures follow
  from Theorem~\ref{zconjverygeneral}.
\end{proof}

\begin{rem}
The proofs above do not apply when $\ell=3$ since 
genericity does not confirm the surjectivity of $\res_H$ and $\rho$.
\end{rem}
The genericity assumption is indispensible: if
$X$ is not $(\ell-1)$-generic, then the conclusion of Theorem \ref{Zconj3}
does not hold.

\begin{example}
Let 
\[
Q(\A)=\prod_{i=1}^4 x_i \prod_{i=1}^3 (x_i^2-x_4^2)(x_i^2-4x_4^2)
\prod_{i=2}^3 (x_i^2-9x_4^2)
(x_3^2-16x_4^2),
\]
a product of $22$ linear forms.  
Then $\A$ is free with $\exp(\A)=(1,5,7,9)$. Let 
$H$ be given by $x_1+x_2+x_3=0$ and let 
$\A^+:=\A \cup\{H\}$. Then 
$\A^{+H}$ is free with exponents 
$(1,10,11)$. Clearly $H$ is $2$-generic but not $3$-generic with respect to
$\A$. So this does not contradict Theorem \ref{Zconj2true}.
\end{example}

\begin{example}
Consider the arrangement $\A$ of $9$ hyperplanes in $\CC^4$ given by the columns of the matrix 
\[
\begin{pmatrix}
1 &0& 0& 1& 0& 0& 1& 1& 0\\
0& 1& 0& 0& 1& 0& 1& 0& 1\\
0& 0& 1& 0& 0& 1& 0& 1& 1\\
0& 0& 0& 1& 1& 1& 1& 1& 1\\
\end{pmatrix}
\]
and let $H$ be the generic hyperplane defined by the vector
$(1,3,5,7)$.  A Macaulay2~\cite{GS} calculation shows that
$\Omega^1(\A)$ and $\Omega^1(\A\cap H)$ are generated in degrees
$\set{-1,-2}$ and $\set{-1,-2,-3}$, respectively.  This shows the
restriction
\[
\res_H\colon \Omega^1(\A)\to \Omega^1(\A\cap H)
\]
is not surjective, so we see that the hypothesis in Theorem \ref{zconjverygeneral} that  $\pd\Omega^1(\A)\le \dim X-2=1$ is, in fact, necessary.
\end{example}

\section{Counterexamples to Ziegler's conjecture on criticality}

We now examine Conjecture \ref{Zconj2} and show it does not
hold in general.  The following basic observation from \cite[\S8]{Z2}
motivates the conjecture:
\begin{prop}
  Suppose $\omega\in\Omega^1(\A)_{-k}$
  but $\omega\not\in\Omega^1(\A\setminus\set{H})$ for
  some $H\in \A$.  Then $\abs{\A}-\abs{\A^H}\geq k$.
\end{prop}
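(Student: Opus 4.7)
The plan is to apply Ziegler's Strong Preparation Lemma (Lemma~\ref{Tlemma}) directly, in coordinates adapted to $H$. Choose coordinates on $V^*$ with $\alpha_H = x_1$, and expand
\[
\omega = \sum_{i=1}^\ell \frac{f_i}{Q}\,dx_i,
\]
where $Q := Q(\A)$ and each $f_i \in S$ is homogeneous of degree $\abs{\A}-k$.

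The first step is to identify the sole coefficient that can carry a pole of $\omega$ along $H$. The defining wedge condition $\omega \wedge d\alpha_H \in (\alpha_H/Q)\Omega^2_V$ for membership in $\Omega^1(\A)$ immediately forces $x_1 \mid f_i$ for every $i \geq 2$, since $dx_1 \wedge dx_1 = 0$ imposes no constraint on $f_1$. Consequently all summands with $i \geq 2$ are regular along $H$, and the hypothesis $\omega \notin \Omega^1(\A \setminus \set{H})$ is equivalent to $x_1 \nmid f_1$.

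The second step is to feed this into Lemma~\ref{Tlemma}, which yields $f_1 \in (\alpha_H, B)$ for $B := \prod_{X \in \A^H}\alpha_X$, a homogeneous form of degree $\abs{\A^H}$. Writing $f_1 = u\alpha_H + vB$ with $u, v$ homogeneous and comparing degrees forces $\deg v = \abs{\A} - k - \abs{\A^H}$. But $x_1 \nmid f_1$ requires $v \neq 0$, so this degree must be nonnegative, which rearranges to the desired inequality $\abs{\A} - \abs{\A^H} \geq k$.

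There is no substantive obstacle: the argument is a direct corollary of the preparation lemma combined with a one-line degree count. The only mild subtlety is the initial observation that among the coefficients $f_i$, only $f_1$ can be responsible for a pole along $H$, which is why Lemma~\ref{Tlemma} suffices even though it constrains only a single coefficient.
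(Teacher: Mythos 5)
Your proof is correct and follows essentially the same route as the paper: invoke Lemma~\ref{Tlemma} for the coefficient $f_1$ of $dx_1$ and compare degrees, using $\deg f_1=\abs{\A}-k$ against $\deg B=\abs{\A^H}$. You simply spell out more explicitly the step the paper leaves implicit, namely that the wedge condition forces $x_1\mid f_i$ for $i\geq 2$, so the pole along $H$ is carried entirely by $f_1$ and the hypothesis amounts to $x_1\nmid f_1$.
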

\begin{proof}
  Since $\omega\not\in\Omega^1(\A\setminus\set{H})$, it must have a pole
  on $H$.  In the notation of Lemma~\ref{Tlemma}, the coefficient $f_1\neq 0$,
  so $\deg(f_1)\geq\abs{\A^H}$ and
  \[
  \deg(\omega)\geq \abs{\A^H}-\abs{\A}.
  \]
\end{proof}
It follows that, if $\A$ is $k$-critical with $\omega\in\Omega^1(\A)_{-k}$,
then $\omega$ has poles on all hyperplanes and
$k\leq\abs{\A}-\abs{\A^H}$ for all $H$.  To find arrangements for which
no hyperplane achieves equality, we will use the
following relationship between criticality and
classical deletion theorems.

\begin{theorem}
\label{thm:Zconj2False}
  Let $\A$ be free and irreducible
  with $\exp(\A)=(1,a,b)_\le$. Suppose
  that $\A \setminus \set{H}$ is not free for any $H\in\A$.
  Then $\A$ is $b$-critical, and $\A$
  is a counterexample to Conjecture \ref{Zconj2}. 
\label{cegeneral}
\end{theorem}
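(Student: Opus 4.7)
The plan is to bound $|\A^H|$ uniformly from above using the structural results on free rank-$3$ arrangements, and then to evaluate $\Omega^1(\A\setminus\set{H})_{-b}$ via the exact sequence of logarithmic forms.

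First, I would show that $|\A^H| \leq a$ for every $H \in \A$. Theorem \ref{A} says either $|\A^H| \leq a+1$ or $|\A^H| = b+1$. In the case $|\A^H| = a+1$ we have $\exp(\A^H) = (1,a)$, and Terao's Addition-Deletion Theorem \ref{adddel}, applied to the triple $(\A, \A\setminus\set{H}, \A^H)$ with two of the three arrangements free, would force $\A\setminus\set{H}$ to be free with exponents $(1, a, b-1)$, contradicting our hypothesis. The case $|\A^H| = b+1$ is ruled out by the same argument with $\exp(\A^H) = (1,b)$, yielding $\A\setminus\set{H}$ free with exponents $(1, a-1, b)$; here we use that $\A$ being irreducible forces $a \geq 2$, so $a-1 \geq 1$ is a valid exponent. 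Hence $|\A^H| \leq a$.

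Next, since $\A$ is free, Theorem \ref{thm:FST2} gives the short exact sequence
\[
0 \to \Omega^1(\A) \stackrel{\cdot \alpha_H}{\to} \Omega^1(\A\setminus\set{H}) \stackrel{\res_H}{\to} \Omega^1(\A^H) \to 0
\]
for every $H \in \A$. Passing to graded dimensions in degree $-b$, and using that multiplication by $\alpha_H$ has degree $+1$,
\[
\dim \Omega^1(\A\setminus\set{H})_{-b} = \dim \Omega^1(\A)_{-b-1} + \dim \Omega^1(\A^H)_{-b}.
\]
The first summand vanishes since $\Omega^1(\A) \cong S[1]\oplus S[a]\oplus S[b]$ with $a \leq b$ forces each of $S_{-b}$, $S_{a-b-1}$, $S_{-1}$ to be zero. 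The second vanishes because $\A^H$ is a rank-$2$ free arrangement with $\Omega^1(\A^H) \cong \oS[1]\oplus \oS[|\A^H|-1]$, and $|\A^H| \leq a \leq b$ combined with $b \geq 2$ (again from irreducibility) gives $\oS_{1-b} = \oS_{|\A^H|-1-b} = 0$. Therefore $\Omega^1(\A\setminus\set{H})_{-b}=0$ for every $H$, while $\Omega^1(\A)_{-b}\neq 0$ since $b$ is an exponent of $\A$. Thus $\A$ is $b$-critical.

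Finally, to see this is a counterexample to Conjecture \ref{Zconj2}: $|\A| = 1 + a + b$ and $|\A^H| \leq a$ together give $|\A| - |\A^H| \geq b+1 > b$ for every $H \in \A$, so no hyperplane achieves the value $k = b$ required by the conjecture. The step requiring the most care is the graded dimension vanishing; aside from that, the argument is a direct application of the cited results, with the irreducibility hypothesis playing an essential role both in ensuring $a, b \geq 2$ and in validating the Addition-Deletion step when $|\A^H| = b+1$.
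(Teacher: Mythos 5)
Your proposal is correct, and the first half (bounding $\abs{\A^H}\le a$ for every $H$ via Theorem \ref{A} together with the Addition--Deletion Theorem, then concluding $\abs{\A}-\abs{\A^H}\ge b+1>b$) is essentially the paper's argument. Where you genuinely diverge is in proving $b$-criticality. The paper argues directly with Saito's criterion: if some deletion $\A'=\A\setminus\set{H}$ had $\Omega^1(\A')_{-b}\neq 0$, one may arrange the degree-$(-b)$ basis element $\omega_3$ of $\Omega^1(\A)$ to lie in $\Omega^1(\A')$, and then $\omega_1,\alpha_H\omega_2,\omega_3$ satisfy Saito's criterion for $\A'$, forcing $\A'$ to be free --- contradicting the hypothesis. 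You instead use the graded exact sequence $0\to\Omega^1(\A)\xrightarrow{\cdot\alpha_H}\Omega^1(\A')\xrightarrow{\res_H}\Omega^1(\A^H)\to 0$ (surjectivity from Theorem \ref{thm:FST2}, since $\A$ is free) and count dimensions in degree $-b$, killing the first term by the freeness of $\Omega^1(\A)$ and the second by the bound $\abs{\A^H}\le a\le b$ together with $b\ge 2$. Both routes are valid; note some differences in structure. Your criticality argument depends on the bound $\abs{\A^H}\le a$ established in the first step (so the two halves are interdependent), whereas the paper's Saito-criterion argument needs only the non-freeness of the deletions and no restriction count; it is also more self-contained, avoiding Theorem \ref{thm:FST2} entirely. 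Conversely, your exact-sequence computation actually identifies the full degree-$(-b)$ piece of $\Omega^1(\A')$, and in fact you only need the left-exactness from Proposition \ref{es}: the inequality $\dim\Omega^1(\A')_{-b}\le\dim\Omega^1(\A)_{-b-1}+\dim\Omega^1(\A^H)_{-b}$ already gives vanishing, so the appeal to surjectivity is harmless but superfluous. Your extra worry that $a-1\ge 1$ must be ``a valid exponent'' in the $\abs{\A^H}=b+1$ case is likewise unnecessary (the Addition--Deletion conclusion that $\A'$ is free is the contradiction regardless), but it does no harm; and your use of irreducibility to guarantee $a\ge 2$ (hence $b\ge 2$) is at the same level of justification as the paper's own remark that $a>1$ for irreducible free arrangements.
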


\begin{proof}
  Let us first check that, if $\A$ is $b$-critical, then it provides a
  counterexample.
By Theorem \ref{A}, either $\abs{\A^H}=b+1$ or $\abs{\A^H} \le a+1$. Since $\A 
\setminus \set{H}$ is not free, Terao's Deletion 
Theorem (Theorem \ref{adddel}) shows that we must have $\abs{\A^H}<a+1$.
That means
\begin{eqnarray*}
  \abs{\A}-\abs{\A^H}&>&1+a+b-(a+1)\\
  &=&b,
\end{eqnarray*}
for all $H\in \A$.  For flats $L$ of codimension $2$ we have
$\abs{\A}-\abs{\A^L}=a+b-1>b$ as well, since $a>1$ for irreducible free
arrangements.

To check that $\A$ is indeed $b$-critical, let $\omega_1$, $\omega_2$,
$\omega_3$ be forms generating $\Omega^1(\A)$ of degrees
$1$, $a$, $b$, respectively.  Suppose instead that $\A$ contains a
hyperplane $H$ for which $\Omega^1(\A')_{-b}\neq 0$, where
$\A':=\A\setminus\set{H}$.  Then $\omega_3\in \Omega^1(\A')$.
Clearly $\omega_1, \alpha_H\omega_2\in \Omega^1(\A')$ as well.  By
Saito's Criterion~\cite[Theorem\ 1.8]{Sa}, these forms generate $\Omega^1(\A')$,
which implies $\A'$ is free, a contradiction.
\end{proof}

\begin{cor}
The reflection arrangement $G(r,r,3)$ for $r\geq3$, defined by the equation
\[
Q(\A):=(x^r-y^r)(y^r-z^r)(x^r-z^r)
\]
in $\CC^3$, is $(2r-2)$-critial, but there are no $H \in \A$ such 
that $\abs{\A}-\abs{\A^H}=2r-2$.
That is, Conjecture \ref{Zconj2} does not hold for $\A$.
\label{ce}
\end{cor}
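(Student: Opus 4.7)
The plan is to reduce the corollary to a direct application of Theorem~\ref{cegeneral}. That theorem requires verifying that $\A=G(r,r,3)$ is free and irreducible with $\exp(\A)=(1,a,b)_\le$ and that $\A\setminus\{H\}$ is non-free for every $H\in\A$; once both hold, $(2r-2)$-criticality and the strict inequality $|\A|-|\A^H|>2r-2$ come for free, provided $b$ coincides with $2r-2$.

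First I would invoke the standard fact that $G(r,r,3)$ is free with $\exp(\A)=(1,r+1,2r-2)$; for $r\ge 3$ these are already in non-decreasing order, and $b=2r-2$ is the largest. Irreducibility is inherited from the irreducibility of the underlying reflection group, and the essentiality of $\A$ in $\CC^3$ is clear. This settles the first half of the hypothesis.

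For the second half I would use the transitivity of $G(r,r,3)$ on reflection hyperplanes to reduce to a single choice, say $H=\{x-y=0\}$. A brief enumeration of the restrictions of the other $3r-1$ hyperplanes to $H\cong\CC^2$ identifies the coincidences: the $r-1$ hyperplanes $\{x=\zeta y\}_{\zeta\ne1}$ all collapse to $\{x=0\}$ on $H$, while the $2r$ hyperplanes $\{y=\zeta z\}$ and $\{x=\zeta z\}$ pair up into $r$ distinct lines $\{x=\zeta z\}$. Hence $|\A^H|=r+1$, and being an arrangement in $\CC^2$ it is automatically free with $\exp(\A^H)=(1,r)$. Now if $\A\setminus\{H\}$ were free, Terao's Addition-Deletion theorem (Theorem~\ref{adddel}) applied to the free $\A$ with $\exp(\A)=(1,r+1,2r-2)$ would force $\exp(\A^H)=(1,r+1)$, contradicting the computed $(1,r)$. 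So no single-hyperplane deletion of $\A$ is free, Theorem~\ref{cegeneral} applies, and $|\A|-|\A^H|=3r-(r+1)=2r-1>2r-2$ uniformly in $H$, giving the strict failure claimed.

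The only step with any genuine content is the combinatorial identification $|\A^H|=r+1$; everything else is a clean appeal to Theorems~\ref{adddel} and~\ref{cegeneral}. I do not anticipate any real obstacle beyond this bookkeeping, and the argument degenerates for $r=2$ precisely because $G(2,2,3)=W(A_3)$ satisfies $|\A^H|=a+1$ rather than $|\A^H|<a+1$.
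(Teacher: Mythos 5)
Your proposal is correct and follows essentially the same route as the paper: exponents $(1,r+1,2r-2)$, the count $\abs{\A^H}=r+1$, non-freeness of every deletion via Addition--Deletion, and then Theorem~\ref{cegeneral}; the paper simply quotes $\abs{\A^H}=r+1$ where you compute it explicitly (via transitivity on hyperplanes), which is fine. One slight imprecision: Addition--Deletion would only force $\exp(\A^H)$ to be \emph{some} two-element sub-multiset of $(1,r+1,2r-2)$, not necessarily $(1,r+1)$, but every such choice has cardinality $r+2$, $2r-1$, or $3r-1$, contradicting $\abs{\A^H}=r+1$ for $r\ge 3$, so your contradiction goes through unchanged.
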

\begin{proof}
  The reflection arrangements $G(r,r,3)$ have exponents $(1,r+1,2r-2)$ by
  \cite[Corollary 6.86]{OT}.
  Note that 
$\abs{\A^H}=r+1$ for each $H \in \A$.
  By the Deletion Theorem, then, the deletion $\A\setminus\set{H}$ is
  not free, and we may use Theorem~\ref{thm:Zconj2False}.
\end{proof}

Other known arrangements provide more counterexamples.
\begin{example}~
  \begin{enumerate}
\item
Let $\A$ be the arrangement in $\R^3$ 
whose deconing consists of all edges and diagonals of the 
regular pentagon. It is known that $\A$ is free with $\exp(\A)=(1,5,5)$
(see \cite[Ex.\ 4.59]{OT}).  It follows from the Addition-Deletion Theorem
that no deletion is free.  
Theorem \ref{thm:Zconj2False} shows that $\A$ is $5$-critical, and
a counterexample to Conjecture \ref{Zconj2}. 
\item
Two non-recursively free arrangements $\A_{13}$ and $\A_{15}$ in $\CC^3$ were
found in \cite[\S6,\,\S7]{ACKN}. They have $\abs{\A_{13}}=13$ and
$\abs{\A_{15}}=15$.  Both are free with 
$\exp(\A_{13})=(1,6,6)$ and $\exp(\A_{15})=(1,7,7)$, while
$\abs{\A^H_{13}}=6$ and $\abs{\A^H_{13}}=7$ for each respective hyperplane.
The cardinalities of the restrictions imply that no deletion is free, so
they are also counterexamples to Conjecture \ref{Zconj2}.
\end{enumerate}
\label{pentagon}
\end{example}

We remark by way of conclusion that
these ideas can be refined in the following way. We omit the proof, since
it is similar to \cite[Theorem~1.9]{A5}.

\begin{theorem}
Let $\A$ be free with basis $\omega_1,\ldots,\omega_\ell$ for $\Omega^1(\A)$ and let $H 
\in \A$. If $\A':=\A \setminus \{H\}$ is not free, and $\omega_i \in \Omega^1(\A')$ for $1 \le i \le \ell-2$, then 
$\A'$ is dual SPOG with exponents $(-1,-d_2,\ldots,-d_{\ell-2},-d_{\ell-1}+1,
-d_\ell+1)$ and level $d=d_{\ell-1}+d_\ell- |\A|+|\A^H|$.
\label{omegaSPOGminus}
\end{theorem}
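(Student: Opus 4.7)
The plan is to mirror the proof of Theorem~1.9 in \cite{A5}, constructing a minimal SPOG free resolution of $\Omega^1(\A')$ with $\ell+1$ generators and a single relation. Because $\A$ is free, Theorem~\ref{thm:FST2} supplies the short exact sequence
\[
0 \to \Omega^1(\A) \stackrel{\cdot \alpha_H}{\longrightarrow} \Omega^1(\A') \stackrel{\res_H}{\longrightarrow} \Omega^1(\A^H) \to 0.
\]
Under the hypothesis, $\omega_1, \ldots, \omega_{\ell-2}$ enter $\Omega^1(\A')$ directly in degrees $-1, -d_2, \ldots, -d_{\ell-2}$. Both $\omega_{\ell-1}$ and $\omega_\ell$ must have poles along $H$: if instead only one of them (say $\omega_\ell$) were singular along $H$, then absorbing its pole by $\alpha_H$ and using $\Omega^1(\A') \subset \Omega^1(\A) = \langle\omega_1, \ldots, \omega_\ell\rangle_S$ would show that $\Omega^1(\A')$ is freely generated by $\omega_1, \ldots, \omega_{\ell-1}, \alpha_H\omega_\ell$, contradicting non-freeness of $\A'$. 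Consequently $\alpha_H\omega_{\ell-1}$ and $\alpha_H\omega_\ell$, of degrees $1-d_{\ell-1}$ and $1-d_\ell$, enter $\Omega^1(\A')$ instead.

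Let $N := \langle\omega_1, \ldots, \omega_{\ell-2}, \alpha_H\omega_{\ell-1}, \alpha_H\omega_\ell\rangle_S \subseteq \Omega^1(\A')$. Since $\alpha_H\omega_i = \alpha_H \cdot \omega_i \in S\omega_i \subseteq N$ for $i \le \ell-2$, we have $\alpha_H\Omega^1(\A) \subseteq N$, and $\res_H$ descends to an isomorphism $\Omega^1(\A')/N \cong \Omega^1(\A^H)/\res_H(N)$. This cokernel is nonzero because $\A'$ is not free. I would then show it is cyclic, with generator in degree $d = d_{\ell-1} + d_\ell - |\A| + |\A^H|$, via the Hilbert-series identity
\[
\mathrm{Hilb}(\Omega^1(\A^H); t) = \mathrm{Hilb}(\Omega^1(\A'); t) - t\,\mathrm{Hilb}(\Omega^1(\A); t),
\]
together with $\mathrm{Hilb}(\Omega^1(\A); t) = \sum_{i=1}^{\ell} t^{-d_i}/(1-t)^{\ell}$ (from freeness of $\A$), matched against the candidate SPOG Poincar\'e polynomial with $\ell$ generators plus one unknown level generator. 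Lifting a representative via surjectivity of $\res_H$ yields $\varphi \in \Omega^1(\A')_d$.

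Finally, $\{\omega_1, \ldots, \omega_{\ell-2}, \alpha_H\omega_{\ell-1}, \alpha_H\omega_\ell, \varphi\}$ minimally generates $\Omega^1(\A')$. The unique SPOG relation arises from $\alpha_H\varphi \in \ker(\res_H) = \alpha_H\Omega^1(\A)$: writing $\varphi = \sum_{i=1}^{\ell} f_i\omega_i$ in $\Omega^1(\A)$ and multiplying by $\alpha_H$ produces
\[
\alpha_H\varphi = \sum_{i=1}^{\ell-2} (f_i\alpha_H)\,\omega_i + f_{\ell-1}(\alpha_H\omega_{\ell-1}) + f_\ell(\alpha_H\omega_\ell),
\]
a single relation with coefficient $\alpha_H$ on $\varphi$, matching Definition~\ref{SPOGdef}. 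The main obstacle is the cyclicity claim in the middle paragraph: verifying that exactly one additional minimal generator of $\Omega^1(\A^H)$ is needed beyond the classes $\res_H(\omega_1), \ldots, \res_H(\omega_{\ell-2})$ --- not several --- requires careful Hilbert-series bookkeeping, and Bath's regularity bound \cite[Theorem 2.29]{Bath22} likely plays a role by controlling possible degrees of minimal generators of $\Omega^1(\A^H)$.
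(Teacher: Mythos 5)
The paper does not actually contain a proof of this theorem (it is explicitly omitted, with a pointer to the argument of \cite[Theorem~1.9]{A5}), so your proposal has to stand on its own; and as it stands it has a genuine gap exactly where you flag one. Your opening steps are fine: the short exact sequence from Theorem~\ref{thm:FST2}, the fact that both $\omega_{\ell-1}$ and $\omega_\ell$ must have poles along $H$ (your localization/absorption argument works, and the case where neither has a pole is excluded since then $\Omega^1(\A)=\Omega^1(\A')$ would be free), and the observation that the cokernel of $N\subseteq\Omega^1(\A')$ is nonzero. But the heart of the theorem is precisely the claim you defer: that this cokernel is cyclic, generated in the stated degree, and that the resulting $\ell+1$ elements admit exactly one relation, with linear coefficient on the level element, as required by Definition~\ref{SPOGdef}. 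The Hilbert-series matching you propose cannot deliver this: neither $\mathrm{Hilb}(\Omega^1(\A');t)$ nor $\mathrm{Hilb}(\Omega^1(\A^H);t)$ is known in advance (the exact sequence only constrains their difference, so you have one equation in two unknowns), ``matching against the candidate SPOG Poincar\'e polynomial'' presupposes the conclusion, and a Hilbert series cannot in any case certify the number of \emph{minimal} generators or that the syzygy module is free of rank one. Bath's regularity bound constrains degrees of generators, not their multiplicity, so it does not close the gap either. Your last paragraph exhibits one relation but never shows it generates all syzygies, which is also needed for the SPOG shape.

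The missing mechanism is Ziegler's Strong Preparation Lemma (Lemma~\ref{Tlemma}). Writing $\alpha_H=x_1$ and the $dx_1$-coefficient of $\omega_i$ over $Q(\A)$ as $x_1g_i+Bh_i$, the hypothesis says $\overline{h_i}=0$ in $\oS$ for $i\le\ell-2$, while $\overline{h_{\ell-1}},\overline{h_\ell}\neq0$; membership of $\sum f_i\omega_i$ in $\Omega^1(\A')$ is then the single condition $f_{\ell-1}\overline{h_{\ell-1}}+f_\ell\overline{h_\ell}\equiv0 \bmod \alpha_H$. This reduces everything to the syzygies of the pair $(\overline{h_{\ell-1}},\overline{h_\ell})$ in $\oS$: one must show their only syzygy is the Koszul one, i.e.\ that their gcd is constant, since otherwise the extra generator would sit in a different degree and the level formula would fail. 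One way to see this, using only results quoted in the paper: surjectivity of $\res_H$ identifies $\Omega^1(\A^H)\cong\Omega^1(\A')/\alpha_H\Omega^1(\A)$, which the coefficient analysis exhibits as $\bigoplus_{i\le\ell-2}\oS$-free summands plus the (rank-one, free) syzygy module of $(\overline{h_{\ell-1}},\overline{h_\ell})$; hence $\A^H$ is free, and the sum of its exponents being $|\A^H|$ forces the gcd to have degree zero and pins the level generator (e.g.\ $h_\ell\omega_{\ell-1}-h_{\ell-1}\omega_\ell$) in degree $-(d_{\ell-1}+d_\ell-|\A|+|\A^H|)$. Minimal generation and the uniqueness of the relation $\alpha_H\varphi=h_\ell(\alpha_H\omega_{\ell-1})-h_{\ell-1}(\alpha_H\omega_\ell)$ then follow by expanding any putative relation in the basis $\omega_1,\ldots,\omega_\ell$ of the free module $\Omega^1(\A)$. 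Without an argument of this kind (or the parallel one from \cite[Theorem~1.9]{A5} on the derivation side), your outline does not yet prove the theorem.
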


\end{document}